\newcommand{\norm}[1]{\left\|#1\right\|}
\newcommand{\abs}[1]{\left\vert#1\right\vert}
\newcommand{\kl}[1]{\left(#1\right)}
\newcommand{\Kl}[1]{\left\{#1\right\}}
\newcommand{\R}{\mathbb{R}}
\newcommand{\grad}{\nabla}
\newcommand{\HoO}{{H^1(\Omega)}}
\newcommand{\LtO}{{L^2(\Omega)}}
\newcommand{\LiO}{{L^\infty(\Omega)}}
\newcommand{\intVx}[1]{\int\limits_\Omega #1 \, d\mathbf{x}}
\newcommand{\x}{\textbf{x}}
\newcommand{\uf}{\mathbf{u}}
\newcommand{\vf}{\mathbf{v}}
\newcommand{\RE}{\mathcal{R}}
\renewcommand{\S}{\mathcal{S}}
\newcommand{\Sb}{\bar{\mathcal{S}}}
\newcommand{\ufhi}{\hat{\uf}^i}
\newcommand{\xhi}{\hat{\x}^i}
\newcommand{\nv}{\vec{\boldsymbol{n}}}
\newcommand{\ufbg}{\uf^{\text{bg}}}
\newcommand{\ufupd}{\uf^{\text{upd}}}
\begin{document}
\title{Challenges for Optical Flow Estimates in Elastography\thanks{Supported by the Austrian Science Fund (FWF): project F6807-N36 (ES and OS), project F6805-N36 (SH), and project F6803-N36 (LK and WD).} 
}
\titlerunning{Challenges for Optical Flow Estimates in Elastography}
%
%
\author{Ekaterina Sherina\inst{1}\orcidID{0000-0002-9542-5145} \and
Lisa Krainz\inst{2}\orcidID{0000-0003-4436-8205} \and
Simon Hubmer\inst{3}\orcidID{0000-0002-8494-5188} \and
Wolfgang Drexler\inst{2}\orcidID{0000-0002-3557-6398} \and
Otmar Scherzer\inst{1,3}\orcidID{0000-0001-9378-7452}
}
\authorrunning{E.~Sherina et al.}
%
%
\institute{University of Vienna, Faculty of Mathematics, Oskar Morgenstern-Platz 1, 1090 Vienna, Austria
\email{ekaterina.sherina@univie.ac.at, otmar.scherzer@univie.ac.at}
%
\and
Medical University of Vienna, Center for Medical Physics and Biomedical Engineering, W\"ahringer G\"urtel 18-20, 1090 Vienna, Austria \email{lisa.krainz@meduniwien.ac.at, wolfgang.drexler@meduniwien.ac.at}
\and
Johann Radon Institute Linz, Altenbergerstraße 69, A-4040 Linz, Austria\\
\email{simon.hubmer@ricam.oeaw.ac.at, otmar.scherzer@ricam.oeaw.ac.at}
}
\maketitle              
\begin{abstract}
In this paper, we consider visualization of displacement fields via optical flow methods in elastographic experiments consisting of a static compression of a sample. We propose an elastographic optical flow method (EOFM) which takes into account experimental constraints, such as appropriate boundary conditions, the use of speckle information, as well as the inclusion of structural information derived from knowledge of the background material. We present numerical results based on both simulated and experimental data from an elastography experiment in order to demonstrate the relevance of our proposed approach. 

\keywords{Displacement field estimation \and Elastographic optical flow \and Speckle tracking}
\end{abstract}

\section{Introduction and motivation}

The ultimate goal of elastography is to reconstruct material parameters of a sample, such as the Lam\'e parameters $\lambda, \mu$, the Young's modulus $E$, or the Possion ratio $\nu$, by exposing it to external forces. This problem is widely used in Medicine, in particular for the non-invasive identification of malignant formations inside the human skin or tissue biopsies during surgeries.

The general strategy has given rise to a number of different elastography approaches; see e.g.~\cite{Doyley_2012,Manduca_Oliphant_Dresner_Mahowald_Kruse_Amromin_Felmlee_Greenleaf_Ehman_2001,Schmitt_1998} and the references therein. These use different external forces (e.g., quasi-static, harmonic, or transient) and measure the resulting deformation either only on the boundary or everywhere inside the sample (using all kinds of imaging techniques such as e.g., X-ray, ultrasound, magnetic resonance, or optical imaging, to name but a few). In order to infer elastic material properties from these measurements, computational inversion techniques, assuming suitable material models such as linear, visco, or hyper-elasticity, have to be implemented. The most common strategy for elastography is a two-step approach consisting in first imaging the sample during displacement with the favorite imaging system and secondly by visualizing the displacement field inside the specimen from which the elastic material parameters can be computed. But also all-in-once approaches are used, which aim for visualization of material parameters directly.

\begin{figure}[t] 
    \centering
    \includegraphics[width=0.49\textwidth, clip=true, trim={6.5cm 5cm 6.5cm 6cm}]{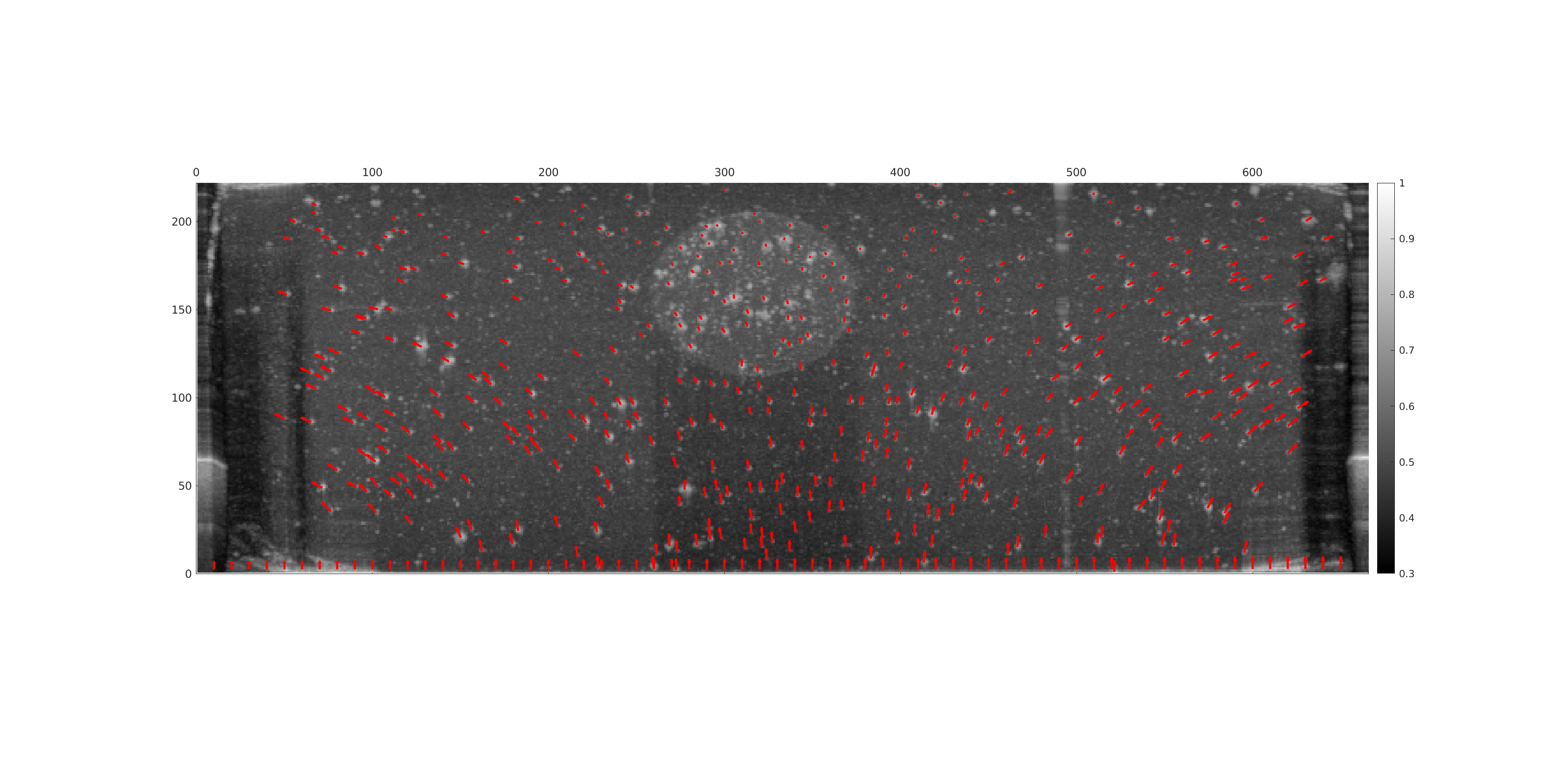}
    \includegraphics[width=0.49\textwidth, clip=true, trim={6.5cm 5cm 6.5cm 6cm}]{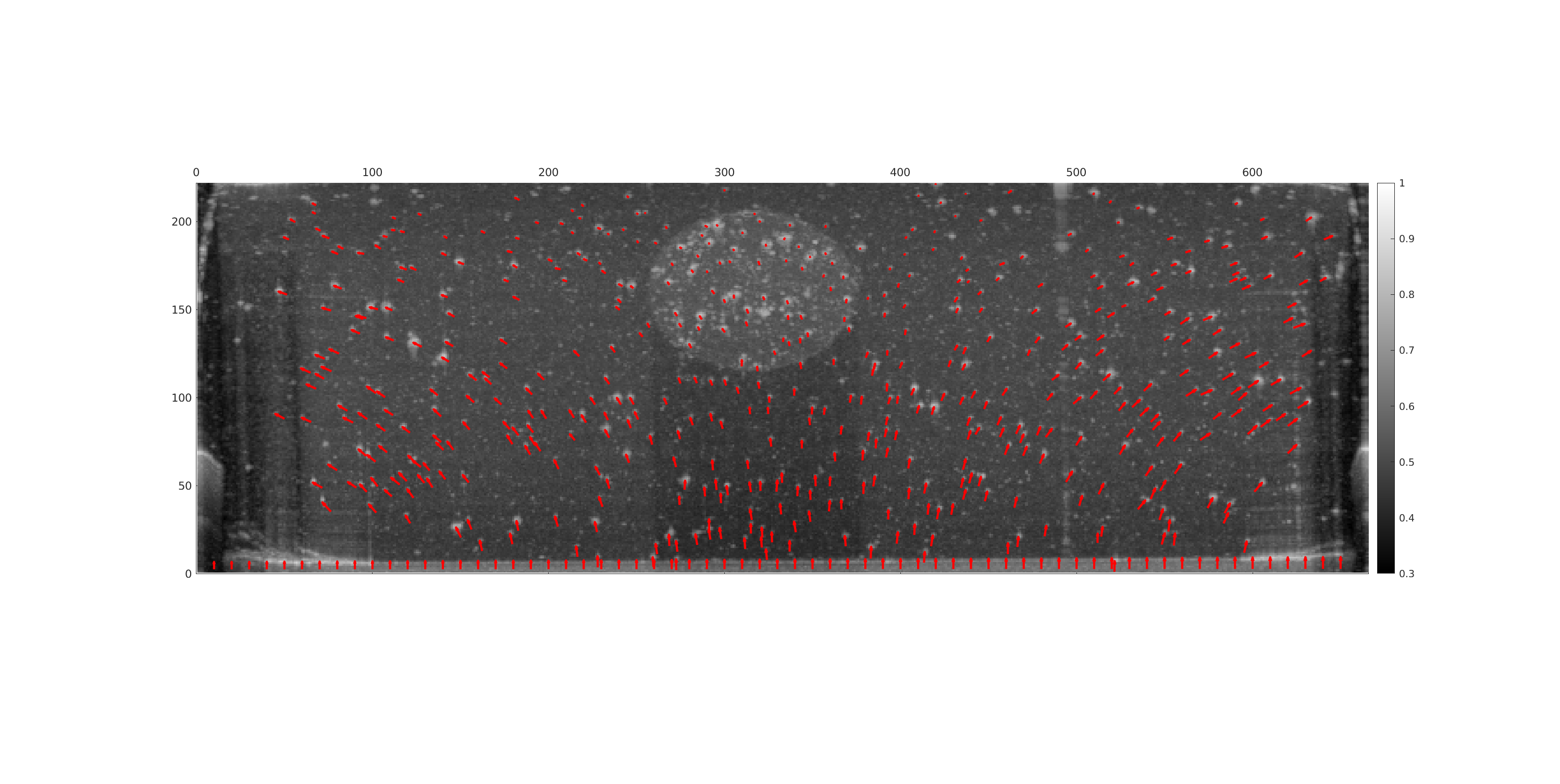}
    \caption{Example of two tomograms from a compressed sample in a quasi-static OCT elastography experiment with arrows (red) indicating the motion of speckle formations.
    }
    \label{fig_exp_oct}
\end{figure}

\begin{figure}[t] 
    \centering
    \includegraphics[width=0.49\textwidth, clip=true, trim={0cm 4cm 0cm 3cm}]{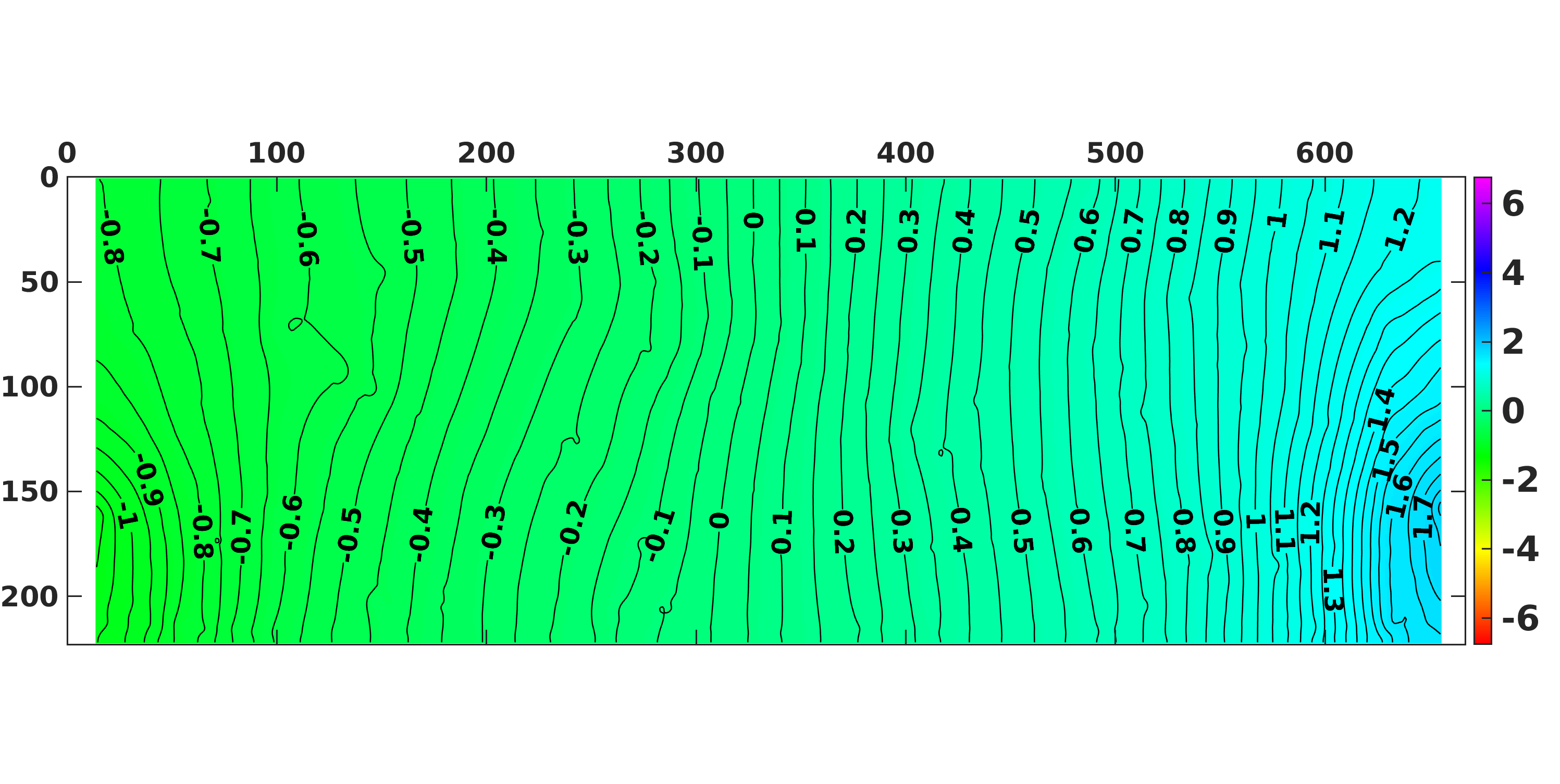}
    \includegraphics[width=0.49\textwidth, clip=true, trim={0cm 4cm 0cm 3cm}]{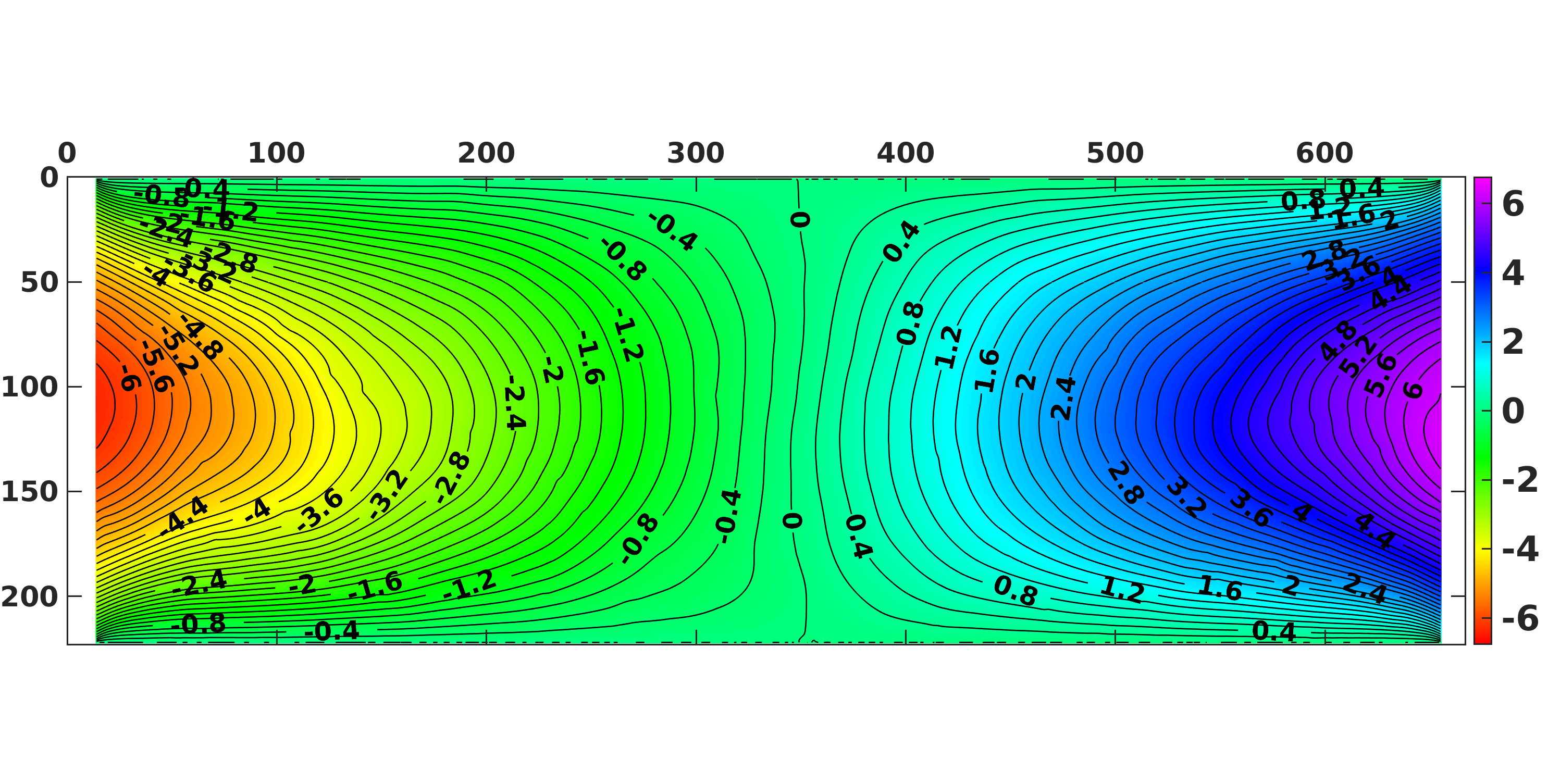}
    \\
    \includegraphics[width=0.49\textwidth, clip=true, trim={0cm 4cm 0cm 3cm}]{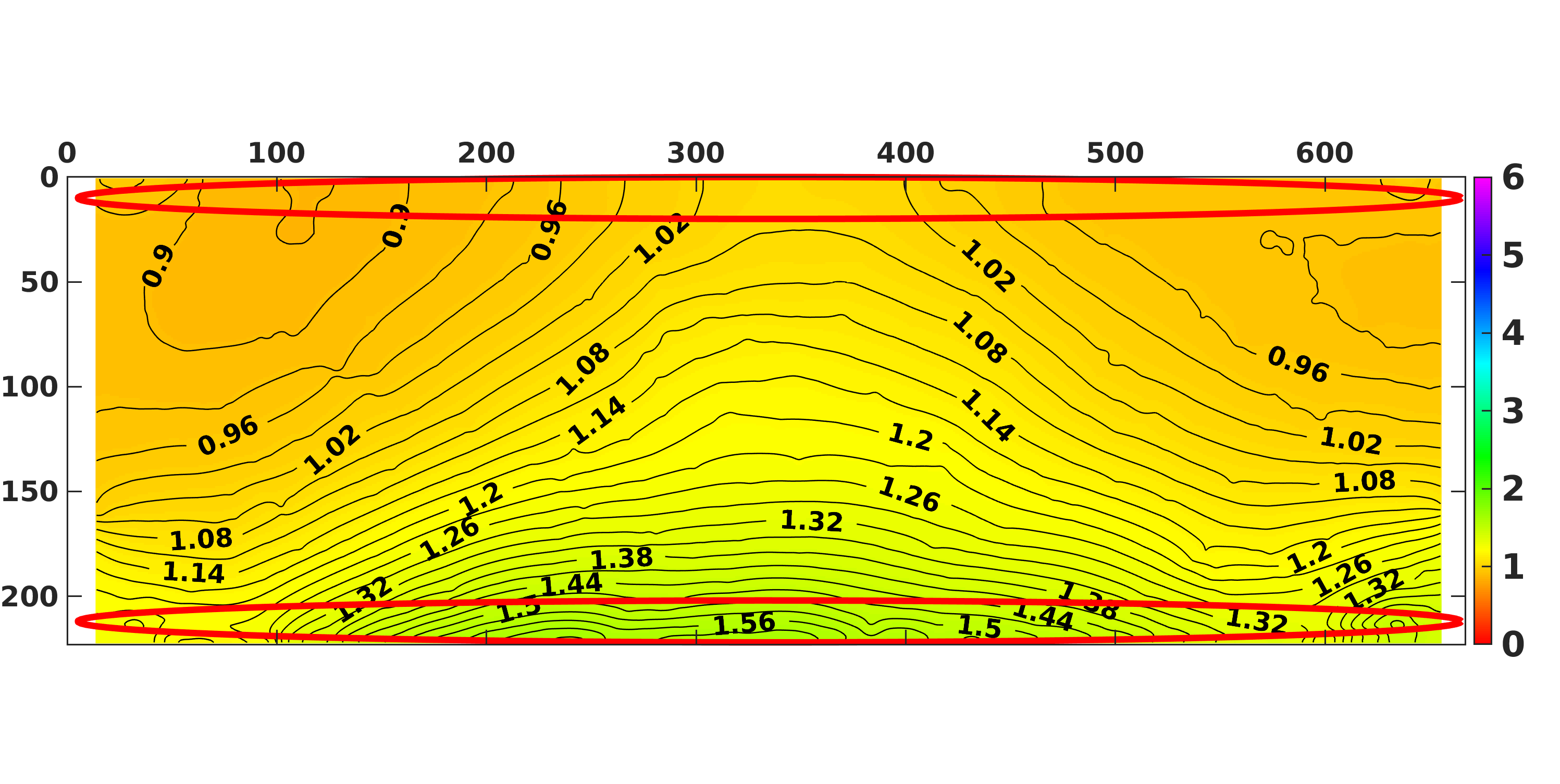}
    \includegraphics[width=0.49\textwidth, clip=true, trim={0cm 4cm 0cm 3cm}]{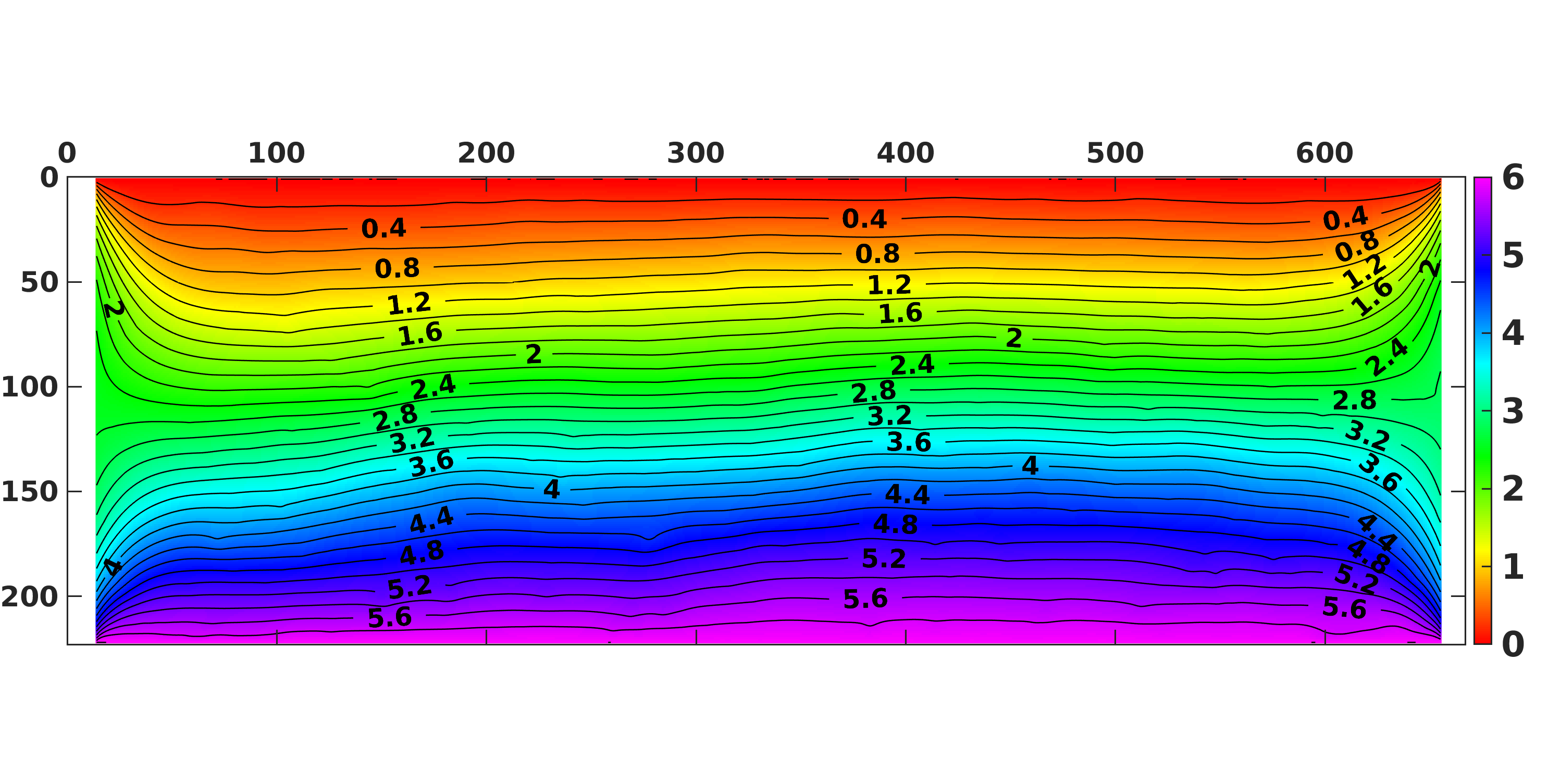}
    \caption{Lateral (top) and axial (bottom) components of the displacement fields estimated by standard optical flow (left) and our proposed elastographic optical flow method (right) applied to the tomograms depicted in Figure~\ref{fig_exp_oct}. Circled in red: level lines which approach the boundary perpendicularly due to natural boundary conditions implicit in standard optical flow. This does not agree with the motion occurring in the elastography experiment.}
    \label{fig_results}
\end{figure}

In this paper, we focus on the displacement field estimation in two-step approaches to elastography; see e.g.~\cite{Hubmer_Sherina_Neubauer_Scherzer_2018,Schmitt_1998,Wang_Larin_2015,Wijesinghe_Kennedy_Sampson_2020} and the references therein. In particular for quantitative results in elastography, accurately estimating this internal displacement field is crucial for obtaining reliable material parameter estimates. However, up to now most of the research on two-step methods for elastography has been concerned with the second step, i.e., the parameter estimation from given displacement fields. Apart from some exceptions discussed below, the first step, i.e., the displacement estimation itself, has been implement with standard optical flow methods. As a result, it can be observed that while many of the proposed methods for material parameter reconstruction work well on simulated displacement fields, their performance with experimental data is limited. In the course of working with different data, see e.g.\ Figure~\ref{fig_exp_oct}, we became convinced that this is due to the fact that important physical constraints are not adequately accounted for by standard optical flow techniques: These are a lack of physical assumptions on the motion, an improper treatment of boundary conditions, and peculiarities of the data and the post-processing in scattering-based imaging modalities. From the physical point of view, the motion observed in an experiment with an elastic sample changing its shape due to external forces, see e.g.~Figure~\ref{fig_exp_oct}, is a non-rigid body transformation, where the distance between the neighboring points changes non-linearly. Secondly, the visibility of a sample's internal structure and its geometry affects the motion estimation quality. The standard formulation of optical flow relies on the assumption of the similarity of sequential images, i.e., of their brightness remaining constant. In scattering-based imaging techniques, tomograms are formed by particles in the material which reflect the electromagnetic waves. This means that optical flow can detect motion in a semi-transparent uniform material only in the presence of reflectors inside. The material needs to contain particles or needs to be artificially seeded with reflectors in order to make the deformation accurately visible. Experimental data frequently happens to violate the brightness constancy and physicality in one or another way: the refractive properties of the material change under compression; imaging artifacts; borders of the sample perpendicular to the imaging direction are either invisible or they are removed from the data during post-processing by cutting out to the region of interest inside. As a result, optical flow is ``too slow'' to follow a depth-varying rate of motion in uniform materials with sparse reflectors, and underestimates the flow near the samples borders by applying the built-in natural boundary conditions which are physically not correct. This can be seen in the level lines of the flow reaching the boundary perpendicularly, see e.g.~Figure~\ref{fig_results} (top left, encircled in red). 

In this paper, we propose an elastographic optical flow (EOFM) method which takes into account additional experimental and physical side constraints. 
We concentrate on EOFM based on quasi-static imaging from (a pair of) successive images. Similar techniques can also be used for quasi-static, harmonic, and  transient imaging. In particular we consider the proper treatment of boundary conditions, the use of speckle information, as well as structure information derived from knowledge of the background material. Based on both simulated and real experimental data we shall see that by combining the considered techniques we can obtain physically meaningful displacement field.

\section{Displacement Field Estimation via Optical Flow}\label{sect_opt_flow}

The basis for displacement field estimation typically is the  optical flow equation
    \begin{equation} 
		\nabla I \cdot \uf + I_t =0 \,.
	\label{eq_opt_flow}
	\end{equation}
It connects an image intensity function $I = I(\x,t)$ with a displacement (motion, flow) field $\uf(\x) = (u_1(\x),u_2(\x))^T$ for $\x \in \Omega \subset \R^2$. Based on this equation, an estimate of the displacement field $\uf$ is typically found by minimizing
	\begin{equation} 
		J(\uf) := \intVx{\left(\nabla I \cdot \uf + I_t\right)^2} +  \alpha \RE(\uf)\,,
		\qquad
		\forall \, t > 0 \,,
	\label{def_J}
	\end{equation}
where $\alpha \geq 0$ is a regularization parameter and $\RE$ is some suitably defined regularization functional. In this paper, we focus on the common choice
	\begin{equation*}
	\begin{split}
		\RE(\uf) &:= \norm{\nabla \uf}^2_{\LtO} := \norm{\nabla u_1}_\LtO^2 + \norm{\nabla u_2}_\LtO^2\,,
	\end{split}
	\end{equation*}
which gives rise to the well-known Horn-Schunck method. It enforces certain smoothness constraints on the displacement field $\uf$ and can also be given a physical interpretation \cite{Schnoerr_1991}. Over the years, a wide variety of advanced motion estimation techniques have been proposed; see e.g.~\cite{Aubert_Kornprobst_2006,Baker_Scharstein_Lewis_Roth_Black_Szeliski_2011,Black_Anandan_1996,Brox_Malik_2011,Chen_Jin_Lin_Cohen_Wu_2013,Sun_Roth_Black_2013,Weickert_Bruhn_Brox_Papenberg_2006} and the references therein. Since at their core most of them still follow a similar strategy, we use \eqref{def_J} as our starting point for all further considerations.

\subsection{Speckle Tracking}\label{sect_speck_tr}

The phenomenon known as speckle is commonly observed in scattering imaging modalities such as ultrasound, optical coherence tomography (OCT), radar-based imaging, and radio-astronomy. Resulting from the constructive and destructive interference of back-scattered waves, it is responsible for granulated images. These speckle patterns are influenced by a number of factors such as the design of the imaging system or optical properties of the imaged samples.

On the one hand, speckle can be seen as a source of noise corrupting the obtained images. On the other hand, they also contain important information, such as on the motion inside the sample; see e.g.~\cite{Schmitt_Xiang_Yung_1998}. This even lead to the introduction of additional \cite{Schmitt_1998} or virtual \cite{Glaatz_Scherzer_Widlak_2015,Schmid_Zabihian_Widlak_Glatz_Liu_Drexler_Scherzer_2015} speckle. Since speckle appears throughout the obtained images, and thus in particular also in otherwise featureless areas, their movement during an elastography experiment can also be used to estimate the internal displacement field more accurately. A popular method for doing so is the normalized cross-correlation method \cite{Duncan_Kirkpatrick_2001,Schmitt_1998}. Unfortunately, if the size of the correlation area is not carefully chosen, or if the applied strain is too small or too high, then this method is prone to miss-estimations. In addition, its pixel-by-pixel processing is very time-consuming.

Hence, in \cite{Sherina_Krainz_Hubmer_Drexler_Scherzer_2020} the authors proposed a novel, heuristics-based image-processing algorithm for the detection and tracking of large speckle formations, termed \emph{bubbles}, which is also used here.

Now, assume that from a given pair of successive images the centers of mass $\xhi =(\hat{x}^i_1,\hat{x}^i_2) \in \Omega$ and the directions of motion $\ufhi =(\hat{u}^i_1, \hat{u}^i_2) \in \R^2$ of a number $M$ of large speckle formations (bubbles) have been extracted. In \cite{Sherina_Krainz_Hubmer_Drexler_Scherzer_2020}, the authors proposed to complement the optical flow functional $J(u)$ by the functional 
	\begin{equation}
		\S_\sigma(\uf) := \sum\limits_{i=1}^{M}\intVx{g_\sigma(\x,\hat{\x}^i) \abs{\uf(\x)-\hat{\uf}^i}^2} \,, 
	\label{def_S_u}
	\end{equation}
where the Gaussian-functions $g_\sigma(\x,\hat{\x}^i)$ are defined by
	\begin{equation*}
		g_\sigma(\x,\hat{\x}^i) = \frac{1}{2\pi \sigma^2} e^{-\frac{(x_1-\hat{x}_1^i)^2 + (x_2-\hat{x}_2^i)^2}{2\sigma^2}} \,.
	\end{equation*}
This leads to the optical flow method, consisting in minimization of
	\begin{equation}
	\begin{split}
		J(\uf) + \beta \S_\sigma(\uf) = \intVx{\left(\nabla I \cdot \uf + I_t\right)^2} +  \alpha \RE(\uf) + \beta \S_\sigma(\uf) \,,
	\end{split}
	\label{def_J_bS}
	\end{equation}
where $\beta \geq 0$ is another regularization parameter. Depending on the application, the quality of the data, and any given a-priory assumptions on the displacement field $\uf$, the values of $\alpha$, $\beta$, and $\sigma$ can be adjusted to put an emphasis either on the smoothness of the field, or the fit to the given bubble motion $\ufhi$.

\subsection{Boundary Conditions}\label{sect_bd_cond}

Consider the minimization of the Horn-Schunck functional $J(\uf)$ defined in \eqref{eq_opt_flow} for a fixed time $t$. Without any additional restrictions, its minimizer $\uf$ can be seen to satisfy the so-called natural boundary conditions
    \begin{equation}
        \nabla u_1 \cdot \nv = 0 
        \qquad
        \text{and}
        \qquad
        \nabla u_2 \cdot \nv = 0 \,,
    \label{bdc_natural}
    \end{equation}
where $\nv$ denotes a unit vector perpendicular to the boundary $\partial \Omega$. The same boundary conditions also hold for the minimizer of the adapted functional defined in \eqref{def_J_bS}, i.e., when speckle information is included in the reconstruction.

However, in most cases these natural boundary conditions do not agree with the physical boundary conditions imposed by an actual elastography setup. Consider e.g.\ a sample which is fixed to a stable surface along a part $\Gamma_1 \subset \partial \Omega$ of its boundary $\partial \Omega$, and which is compressed by an amount $g$ at another part $\Gamma_2$ of its boundary. This can be expressed by the Dirichlet boundary conditions 
    \begin{equation}
        \uf = 0  \quad \text{on } \Gamma_1 \,,
        \qquad
        \text{and}
        \qquad
        \uf = g  \quad \text{on } \Gamma_2 \,, 
    \label{bdc_Dirichlet}
    \end{equation}
which clearly differ from the natural boundary conditions \eqref{bdc_natural}. Hence, in this situation the standard Horn-Schunck optical flow algorithm would yield an estimate of the internal displacement field which is not physically meaningful.

Two possibilities for incorporating known Dirichlet boundary conditions of the form \eqref{bdc_Dirichlet} into the displacement field reconstruction suggest themselves. The first is to introduce an additional penalty term of the form
    \begin{equation}
        \mathcal{B}(\uf) := \int_{\Gamma_1 \cup \Gamma_2}  \abs{\uf - g}^2  \,dS 
    \label{def_B}
    \end{equation}
which can be used to enforce the Dirichlet boundary conditions in a weak form. The other is to restrict the search space in the minimization of either \eqref{def_J} or \eqref{def_J_bS} to contain only those functions which satisfy the required boundary conditions.

Another situation in which the question of proper boundary conditions becomes particularly relevant is when one can only work with measurements of a certain region within the sample. This is e.g.\ the case if parts of the measurements are corrupted by strong noise or artefacts and thus have to be removed, or if only this region was imaged to begin with. It should be clear that in this case the natural boundary conditions \eqref{bdc_natural} are even less appropriate than before.

In order to deal with this issue, we propose the following strategy: For those boundaries of the measurement region which overlap with the sample boundary, available boundary conditions like \eqref{bdc_Dirichlet} can be used as described above. For the remaining boundaries, we propose to use the motion information contained in bubbles to obtain physically meaningful displacement fields. Typically, this can be accomplished by adding the penalty term $\S_\sigma(\uf)$ defined in \eqref{def_S_u} and a proper tuning of the corresponding parameters $\beta$ and $\sigma$; compare with \eqref{def_J_bS}. In case that only comparatively few bubbles are located near the boundaries without available conditions, it can be advantageous to replace $\S_\sigma(\uf)$ by the functional
    \begin{equation*}
        \tilde{\S}_\sigma(\uf) := \sum\limits_{i=1}^{M} \beta_i \intVx{g_\sigma(\x,\hat{\x}^i) \abs{\uf(\x)-\hat{\uf}^i}^2} \,,
    \end{equation*}
and to emphasize the motion information contained in bubbles close to the boundaries by adapting the corresponding values of the parameters $\beta_i$.

\subsection{Homogeneous Background Information}\label{sect_hom_bg}

General knowledge on the expected structure of the sought for displacement field can be used to enhance the overall quality of the reconstruction methods. Consider e.g.\ the case that a sample consists of multiple different inclusions in an otherwise homogeneous background material. Then one can write
	\begin{equation}
		\uf = \ufbg + \ufupd \,,
	\label{Ansatz}
	\end{equation}
where $\ufbg$ denotes the displacement field which would result from the same elastography experiment carried out on the same sample but without any inclusions, and $\ufupd$ denotes an update which amends this field. Assuming e.g. that the material parameters of the background material are known, then the field $\ufbg$ can be computed by applying a suitable forward model. Hence, the task of estimating the displacement field $\uf$ reduces to finding the update field $\ufupd$. This can be done by adapting the reconstruction approach outlined above as follows: Since $\uf = \ufbg + \ufupd$ should satisfy the optical flow equation \eqref{eq_opt_flow}, we can adapt \eqref{def_J_bS} and determine the update field $\ufupd$ as the minimizer of the functional
	\begin{equation*}
	\begin{split}
		\intVx{ \kl{\nabla I \cdot\kl{ \ufbg + \ufupd}+ I_t }^2 }  +  \alpha \RE(\ufupd) + \beta \Sb_\sigma(\ufupd) \,,
	\end{split}
	\end{equation*}	
where now the speckle functional $\S_\sigma$ defined in \eqref{def_S_u} is replaced by the functional
    \begin{equation*}
		\Sb_\sigma(\ufupd) :=
		\sum\limits_{i=1}^{M}\intVx{g_\sigma(\x,\hat{\x}^i) \abs{\ufupd(\x)-\left(\hat{\uf}^i - \ufbg(\hat{\x}^i)\right)}^2} \,, 
	\end{equation*}	
which accounts for the relative shifts induced by \eqref{Ansatz}. This also effects the question of appropriate boundary conditions. For example, in the case of the Dirichlet boundary conditions \eqref{bdc_Dirichlet}, these have to be satisfied for both $\uf$ and for $\ufbg$, and thus $\ufupd$ has to satisfy
    \begin{equation*}
        \ufupd = 0  \quad \text{on } \Gamma_1 \cup \Gamma_2 \,.
    \end{equation*}
Furthermore, if one assumes that there holds $\uf \approx \ufbg$ on $\partial \Omega$, then homogeneous Dirichlet boundary conditions $\ufupd = 0$ can also be applied on those parts of the boundary where no other physically motivated boundary conditions can be used (cf. Section~\ref{sect_bd_cond}). Even though this may only be a rough approximation, it nevertheless results in a more meaningful condition than the natural boundary condition \eqref{bdc_natural}, and together with the speckle information typically helps to improve the overall reconstruction quality.

\section{The Elastographic Optical Flow Method}\label{sect_recon}

In this section, we formulate the elastographic optical flow method (EOFM) for the problem of determining the internal displacement field of a sample which is subjected to a deformation of the form \eqref{bdc_Dirichlet} and imaged with some scattering imaging modality. It takes into account prior knowledge on the homogeneous background material and tracking of bubbles. For this, let $\uf = \ufbg + \ufupd$ denote the decomposition of $\uf$ described in Section~\ref{sect_hom_bg}, and the field $\ufbg$ be known. Then EOFM determines $\ufupd$ as the minimizer of
    \begin{equation}
	\begin{split}
		F(\vf) := \intVx{ \kl{\nabla I \cdot\kl{ \ufbg + \vf}+ I_t }^2 }  +  \alpha \RE(\vf) + \beta \Sb_\sigma(\vf)  \,,
	\end{split}
	\label{def_F}
	\end{equation}
where the space over which this functional is minimized is 
    \begin{equation*}
        V :=  \Kl{ \vf \in \HoO^2 \, \vert \, \vf = 0 \text{ on } \Gamma_1 \cup \Gamma_2} \,.
    \end{equation*}

In order to analyse this approach and compute a minimizer of \eqref{def_F} we adopt the ideas of Schn\"orr \cite{Schnoerr_1991}. That is, we rewrite $F$ in the form
	\begin{equation}
		F(\vf) = \frac{1}{2}a(\vf,\vf) - b(\vf) + c \,,
	\label{F_a_b_c}
	\end{equation}
where the bilinear form $a(\cdot,\cdot)$ and the linear form $b(\cdot)$ are given by
    \begin{equation*}
    \begin{split}
        & a(\uf,\vf) := 
        2 \intVx{ \kl{ \left( \nabla I \cdot \uf \right)\left( \nabla I \cdot \vf \right) +  \alpha \nabla \uf:\nabla \vf  +  \beta \sum\limits_{i=1}^{M} g_\sigma(\x,\hat{\x}^i) \left(\uf \cdot \vf\right)}}  \,,
        \\
        & b(\vf) :=
        2 \intVx{\kl{\beta \sum\limits_{i=1}^{M}  g_\sigma(\x,\hat{\x}^i) (\hat{\uf}^i - \ufbg(\hat{\x}^i)) \cdot \vf -\kl{I_t +  \nabla I \cdot \ufbg } \left( \nabla I \cdot \vf \right) }} \,,
	\end{split}
	\label{def_a_b_c}
	\end{equation*}
and $c$ denotes a constant term. Using this representation \eqref{F_a_b_c}, we obtain 
 
\begin{theorem}\label{thm_main}
Let $\Omega \subset \R^2$ be a nonempty, bounded, open, and connected set with a Lipschitz continuous boundary $\partial \Omega$ and let $\grad I \in \LiO$ and $I_t \in \LtO$. Furthermore, let $\alpha > 0$ and let the components of $\grad I$ be linearly independent. Then, the unique minimizer of the problem
 	\begin{equation}\label{min_F}
 		\min\limits_{\uf \in V} F(\uf) \,, 
 	\end{equation}
is given as the unique solution $\uf \in V$ of the linear problem
	\begin{equation}\label{a_b}
		a(\uf,\vf) = b(\vf) \,, \qquad \forall \, \vf \in V \,.
	\end{equation}
Furthermore, the solution of this equation depends continuously on the right-hand side $b(\cdot)$, but not necessarily on the image intensity function $I$.
\end{theorem}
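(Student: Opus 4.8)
The plan is to recognize \eqref{min_F} as the minimization of the quadratic functional \eqref{F_a_b_c} over a Hilbert space and to apply the Lax--Milgram lemma to the associated variational equation \eqref{a_b}, then to transfer existence/uniqueness back to the minimization problem via strict convexity. As preparation I would record that $V$, being a closed subspace of $\HoO^2$, is itself a Hilbert space with the $\HoO^2$ inner product, that $a(\cdot,\cdot)$ is symmetric by construction, and that both forms are bounded on $V$: by the Cauchy--Schwarz inequality together with $\grad I \in \LiO$ and the boundedness of the Gaussians $g_\sigma$, each of the three terms defining $a(\uf,\vf)$ is controlled by a constant times $\norm{\uf}_{\HoO^2}\norm{\vf}_{\HoO^2}$; likewise $\abs{b(\vf)} \le C\norm{\vf}_{\HoO^2}$, since the vectors $\ufhi - \ufbg(\xhi)$ are fixed, $I_t \in \LtO$, $\grad I \cdot \ufbg \in \LtO$, and $\norm{\grad I \cdot \vf}_{\LtO} \le \norm{\grad I}_{\LiO}\norm{\vf}_{\LtO}$.

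The hard part will be the coercivity of $a$ on $V$. Since the speckle term is nonnegative, $a(\vf,\vf) \ge 2\kl{\norm{\grad I \cdot \vf}_{\LtO}^2 + \alpha\norm{\grad \vf}_{\LtO}^2}$, so it suffices to prove a Poincaré-type estimate $\norm{\vf}_{\HoO^2}^2 \le C\kl{\norm{\grad I \cdot \vf}_{\LtO}^2 + \norm{\grad \vf}_{\LtO}^2}$ valid for all $\vf \in \HoO^2$. I would establish this by a compactness (Peetre-type) argument: if it failed, there would be a sequence $\vf_n$ with $\norm{\vf_n}_{\HoO^2} = 1$ and $\norm{\grad I \cdot \vf_n}_{\LtO}^2 + \norm{\grad \vf_n}_{\LtO}^2 \to 0$; by reflexivity of $\HoO^2$ and the Rellich--Kondrachov embedding (here $\Omega$ bounded Lipschitz is used), a subsequence converges weakly in $\HoO^2$ and strongly in $\LtO^2$ to some $\vf$; from $\grad \vf_n \to 0$ in $\LtO$ one gets $\grad \vf = 0$, hence $\vf$ is componentwise constant on the connected set $\Omega$, and in fact $\vf_n \to \vf$ in $\HoO^2$, so $\norm{\vf}_{\HoO^2} = 1$ and $\vf = (k_1,k_2) \ne 0$; finally, using $\grad I \in \LiO$ and $\LtO$-convergence, $\grad I \cdot \vf_n \to \grad I \cdot \vf = k_1 \partial_{x_1}I + k_2 \partial_{x_2}I$ in $\LtO$, so this limit vanishes, contradicting the assumed linear independence of the components of $\grad I$. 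Restricting the estimate to $V$ and combining with the lower bound on $a(\vf,\vf)$ gives coercivity with constant $\gamma = 2\min\Kl{1,\alpha}/C > 0$. (If $\Gamma_1 \cup \Gamma_2$ carries positive surface measure, coercivity on $V$ also follows directly from the Poincaré--Friedrichs inequality, without invoking the linear independence of $\grad I$.)

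With these properties in hand, the Lax--Milgram lemma yields a unique $\uf \in V$ solving \eqref{a_b}, together with the bound $\norm{\uf}_{\HoO^2} \le \gamma^{-1}\norm{b}_{V^*}$. To identify this $\uf$ with the minimizer of \eqref{min_F}, I would use that $a$ is symmetric, bounded, and coercive, so that $F = \frac12 a(\cdot,\cdot) - b(\cdot) + c$ is strictly convex and coercive on $V$; expanding $F(\uf + s\vf)$ as a quadratic polynomial in $s \in \R$ shows that the variational equation $a(\uf,\vf) = b(\vf)$ for all $\vf \in V$ is precisely the first-order optimality condition, which by strict convexity characterizes the unique minimizer. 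Hence \eqref{min_F} and \eqref{a_b} share the same unique solution $\uf$.

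For the last assertion: given two right-hand sides $b_1, b_2$ with solutions $\uf_1, \uf_2$, subtracting the equations and testing with $\vf = \uf_1 - \uf_2$ gives, by coercivity, $\norm{\uf_1 - \uf_2}_{\HoO^2} \le \gamma^{-1}\norm{b_1 - b_2}_{V^*}$, i.e.\ Lipschitz (hence continuous) dependence on $b$. Continuous dependence on $I$, on the other hand, cannot be expected: $I$ enters $a$ quadratically through $\grad I$ and enters $b$ through both $\grad I$ and $I_t$, so the forms themselves already fail to depend continuously on $I$ in topologies weaker than $\grad I \in \LiO$; moreover the coercivity constant $\gamma$ obtained above depends on $I$ in an uncontrolled way and degenerates as the components of $\grad I$ approach linear dependence (where uniqueness, or even existence, may be lost), so the solution operator $I \mapsto \uf$ admits no uniform modulus of continuity, and concrete instances of this instability are easy to exhibit.
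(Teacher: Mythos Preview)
Your proposal is correct and follows essentially the same approach as the paper, which simply cites \cite{Schnoerr_1991,Sherina_Krainz_Hubmer_Drexler_Scherzer_2020} for the details: Lax--Milgram on the symmetric bilinear form $a$, with coercivity obtained from the Peetre-type contradiction argument exploiting the linear independence of the components of $\grad I$. Your parenthetical observation that Poincar\'e--Friedrichs already yields coercivity on $V$ whenever $\Gamma_1\cup\Gamma_2$ has positive surface measure is a nice addition not made explicit in the paper.
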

\begin{proof}
This follows from the representation \eqref{F_a_b_c} in the same way as in \cite{Schnoerr_1991,Sherina_Krainz_Hubmer_Drexler_Scherzer_2020}.
\end{proof}

\section{Numerical Results}\label{sect_numerics}

In this section, we present some numerical results of the application of our proposed EOFM approach based on both simulated and experimental data. The implementation and computational environment is adapted from \cite{Sherina_Krainz_Hubmer_Drexler_Scherzer_2020}. We also combine EOFM with a standard multi-scale strategy \cite{Lauze_Kornprobst_Memin_2004,MeinhardtLlopis_SanchezPerez_Kondermann_2013,Modersitzki_2009}, which by itself is not sufficient for satisfactory results, but is seen to provide additional accuracy to our proposed approach.

\subsection{Simulated Data}

First, we consider a synthetic sample consisting of a circular inclusion within a homogeneous background of different stiffness, a simulated scattering image, which is shown in Figure~\ref{fig_sam} (left) with $200$ randomly distributed bubbles. This sample is assumed to be fixed on top and compressed from the bottom, which corresponds to the Dirichlet boundary conditions \eqref{bdc_Dirichlet}, and allowed to move freely on the sides. Under the model of linearized elasticity (see \cite{Hubmer_Sherina_Neubauer_Scherzer_2018} or \cite{Sherina_Krainz_Hubmer_Drexler_Scherzer_2020}), this results in a displacement field $\uf$ with components $u_1$, $u_2$ depicted in Figure~\ref{fig_sim_f}. The resulting compressed sample is given in Figure~\ref{fig_sam} (right). For extracting the motion $\ufhi$ of added bubbles for \eqref{def_S_u}, we used speckle tracking from Section~\ref{sect_speck_tr}. 

\begin{figure}[t!]
    \centering
	\includegraphics[width=0.48\textwidth, clip=true, trim={0cm 4cm 0cm 3cm}]{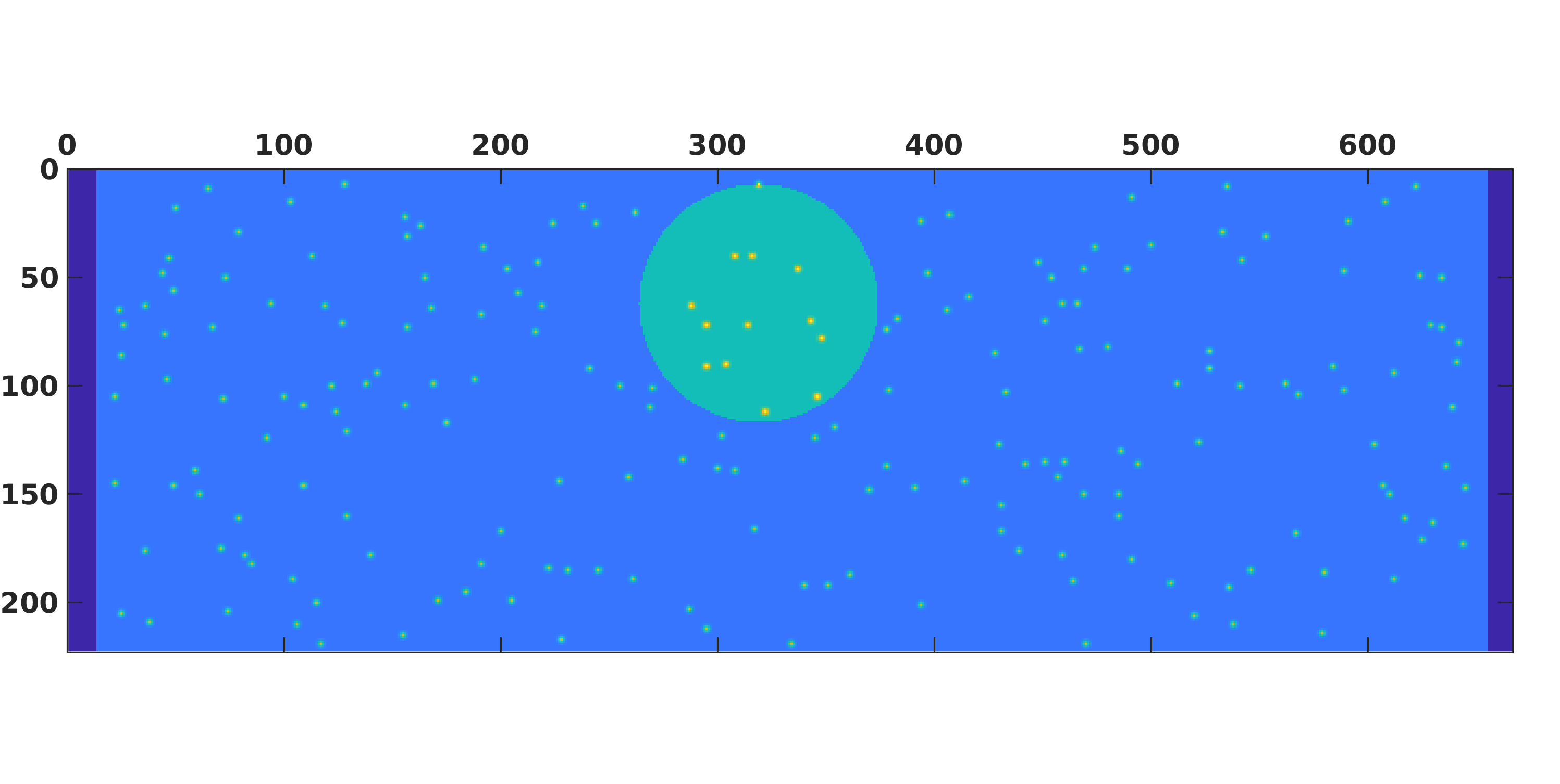}
	\includegraphics[width=0.48\textwidth, clip=true, trim={0cm 4cm 0cm 3cm}]{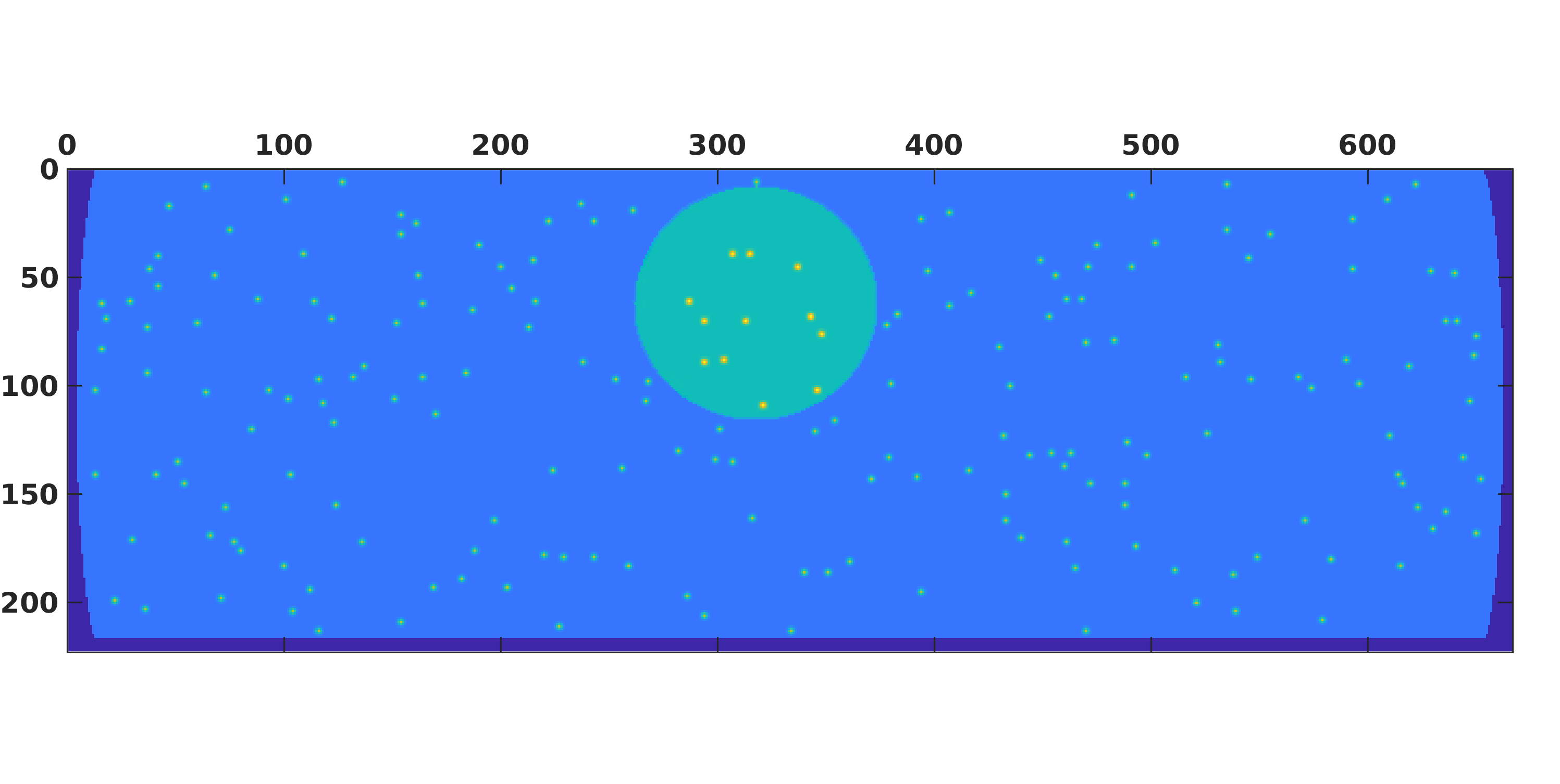}
	\caption{Simulated sample with randomly distributed speckle formations (bubbles) before (left) and after (right) compression.}
	\label{fig_sam}
\end{figure}

\begin{figure}[t!]
    \includegraphics[width=0.49\textwidth, clip=true, trim={0cm 4cm 0cm 3cm}]{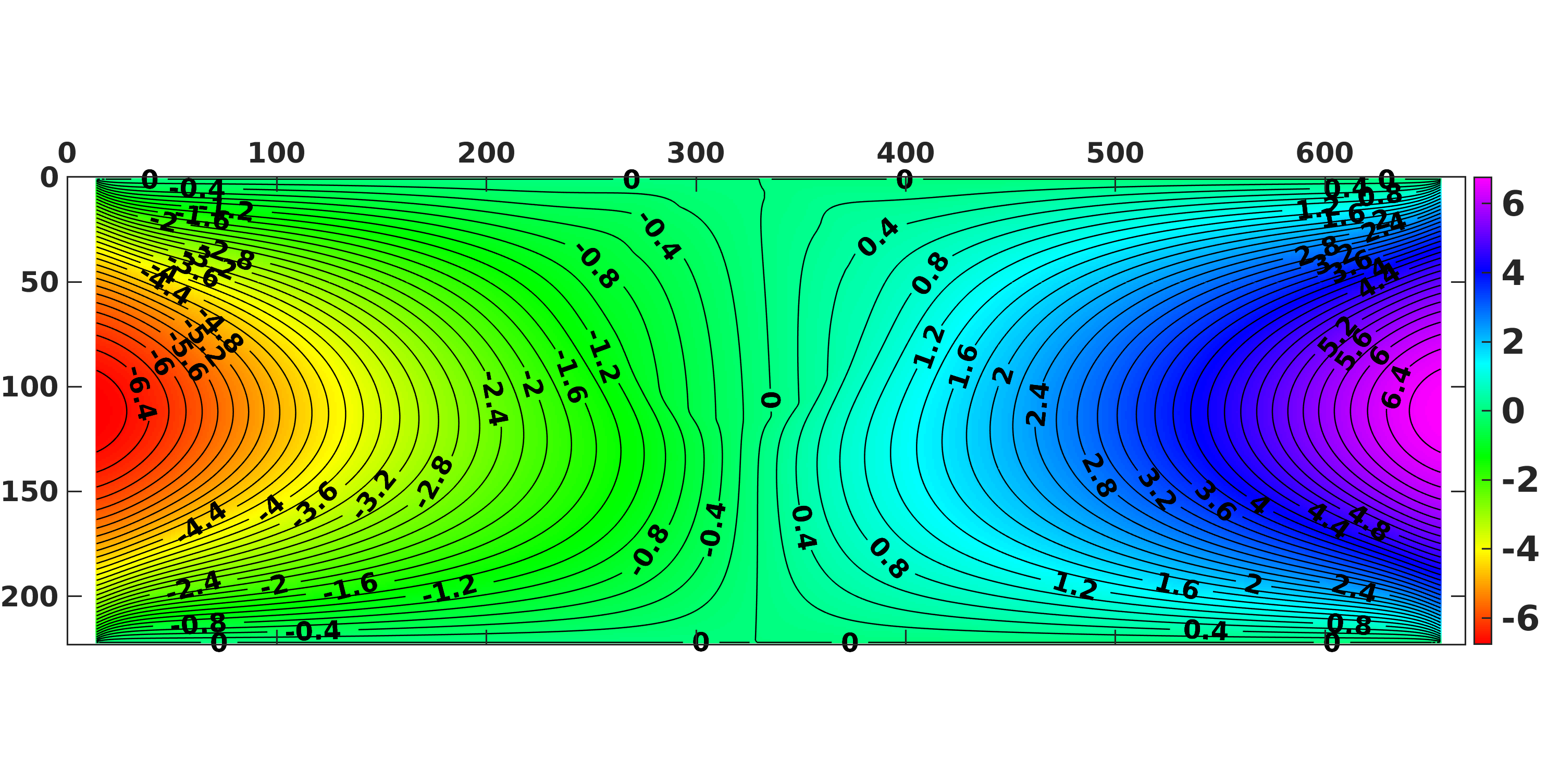}
    \includegraphics[width=0.49\textwidth, clip=true, trim={0cm 4cm 0cm 3cm}]{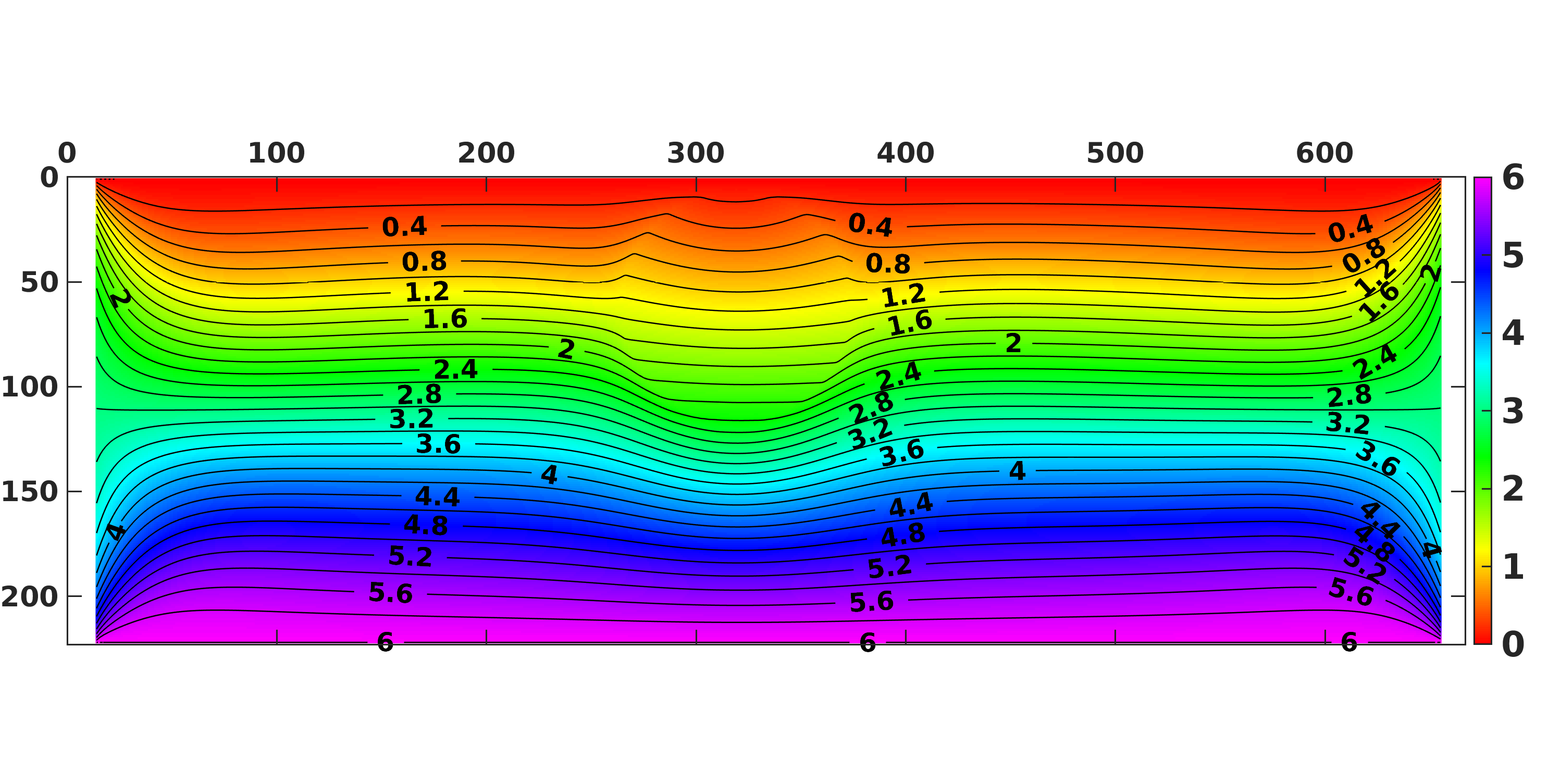}
    \caption{Components $u_1$ (left) and $u_2$ (right) of the simulated displacement field $\uf$.}
    \label{fig_sim_f}
\end{figure}

In order to illustrate the effects of different optical flow methods explained in Sections~\ref{sect_speck_tr}, \ref{sect_bd_cond}, \ref{sect_hom_bg} on the reconstruction of the displacement field, we run tests with various combinations of regularization terms and side constraints. Based on the independent parameter study, we chose $\alpha=0.8$ for smoothness-regularization and $\beta=0.5$, $\sigma=5$ when utilizing the speckle-regularization. For using the multi-scale approach, we took $4$ scale-levels (see  \cite{Sherina_Krainz_Hubmer_Drexler_Scherzer_2020} for details). In the first test, we solve the minimisation problem for \eqref{def_J_bS} with the bubble motion information. Figure~\ref{fig_sim_exp_0} depicts the results of the displacement estimation and absolute error in the field components. The speckle-regularization improves the flow estimate in $u_2$, with its absolute error being up to $0.6$ pixel in the lower half of the sample. However, the lateral motion is underestimated by up to $2.6$ pixel in the border area. Next, we minimise \eqref{def_F} with the background field information induced by the same boundary displacement \eqref{bdc_Dirichlet} known from the experiment. The relative errors for the resulting estimates are collected in Table~\ref{tab_rel_err}. Figures~\ref{fig_sim_exp_2} and  \ref{fig_sim_exp_4} depict the fields reconstructed without ($\beta=0$) and with ($\beta=0.5$) the speckle-regularization in addition to the smoothness-regularization and the multi-scale approach. The latter result shows only the maximal error of $0.35$ pixel in $u_2$ and $0.6$ pixel in $u_1$. 

\begin{figure}[t!]
    \includegraphics[width=0.49\textwidth, clip=true, trim={0cm 4cm 0cm 3cm}]{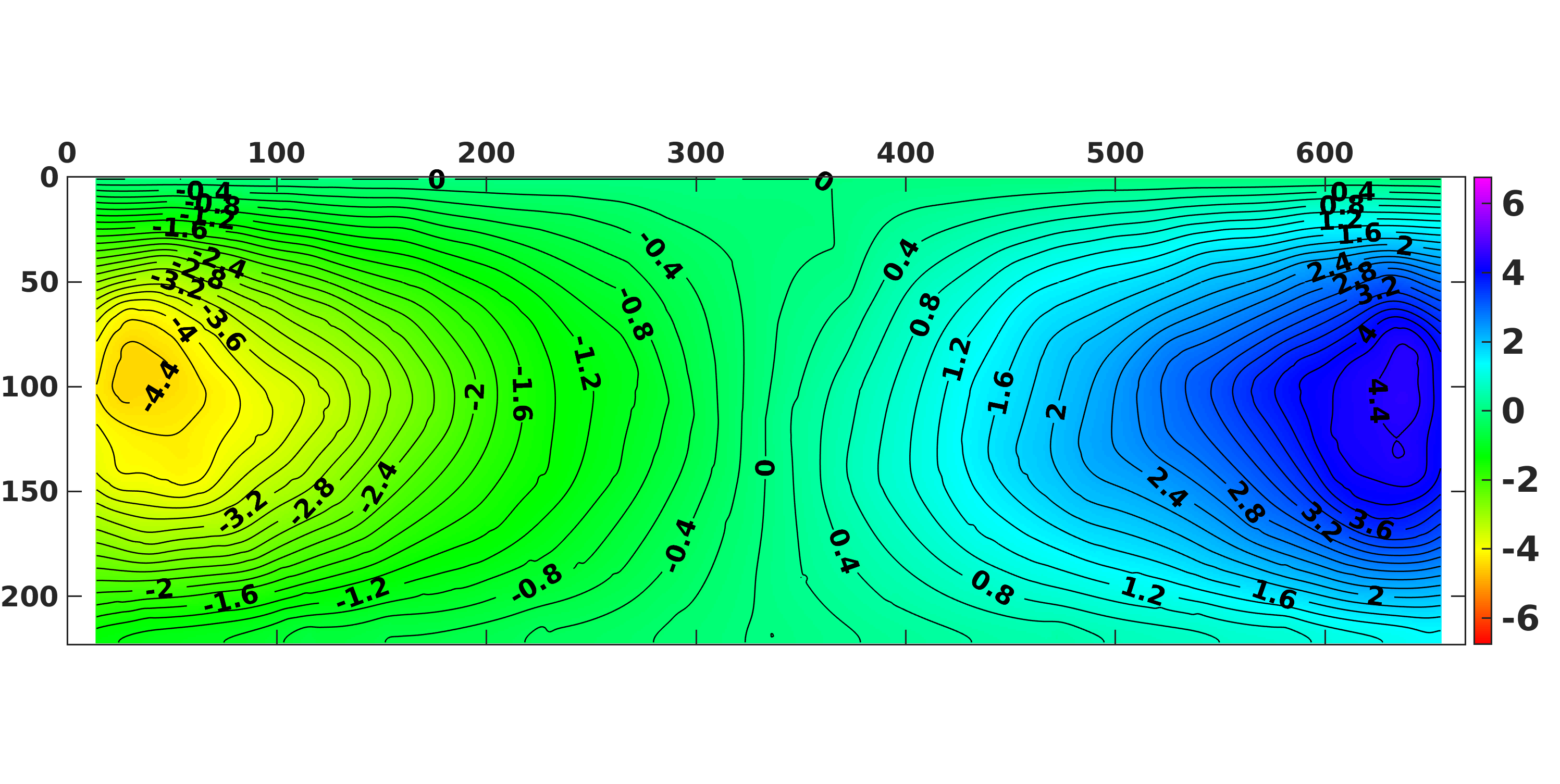}
    \includegraphics[width=0.49\textwidth, clip=true, trim={0cm 4cm 0cm 3cm}]{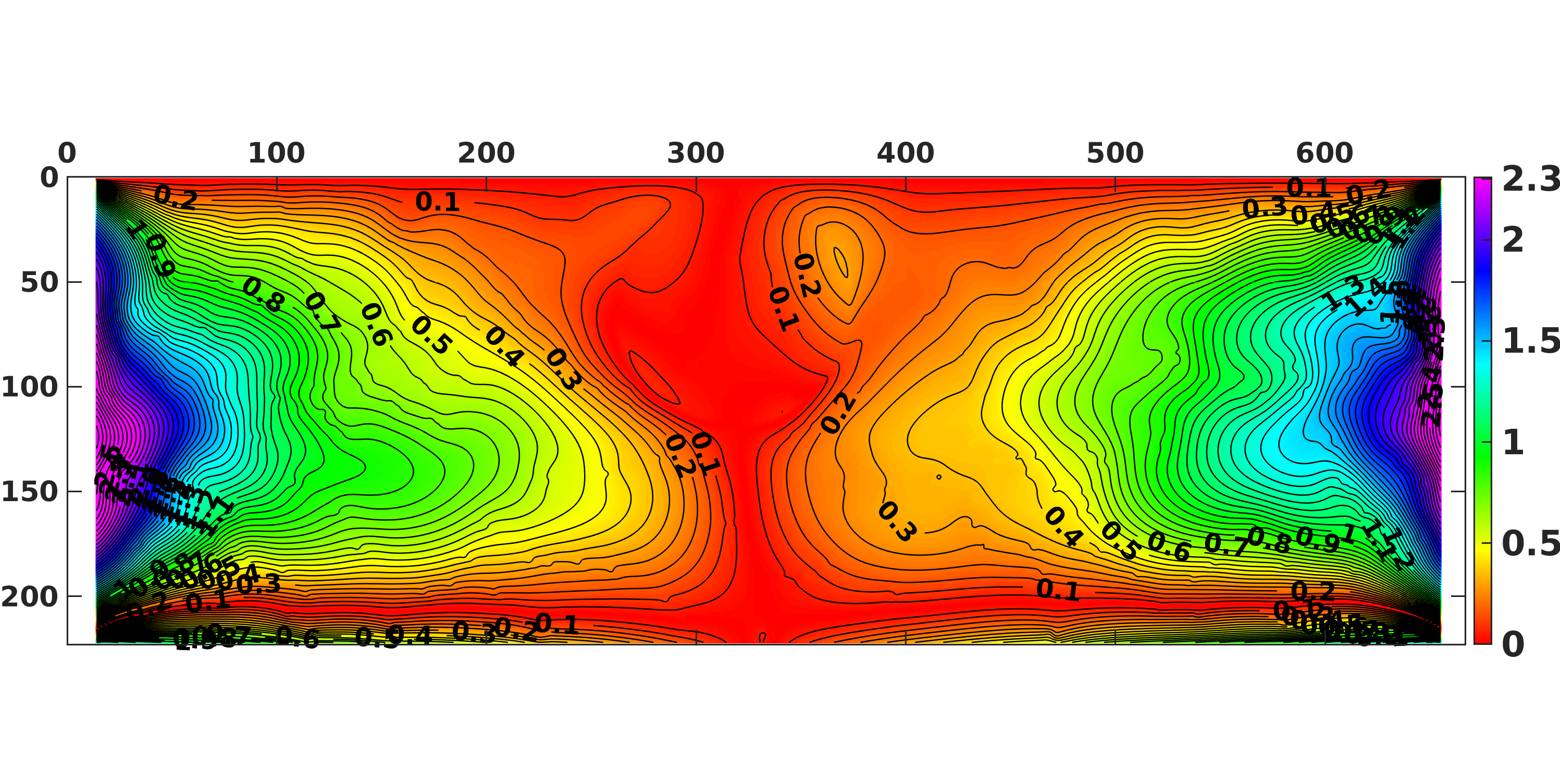}
    \\
    \includegraphics[width=0.49\textwidth, clip=true, trim={0cm 4cm 0cm 3cm}]{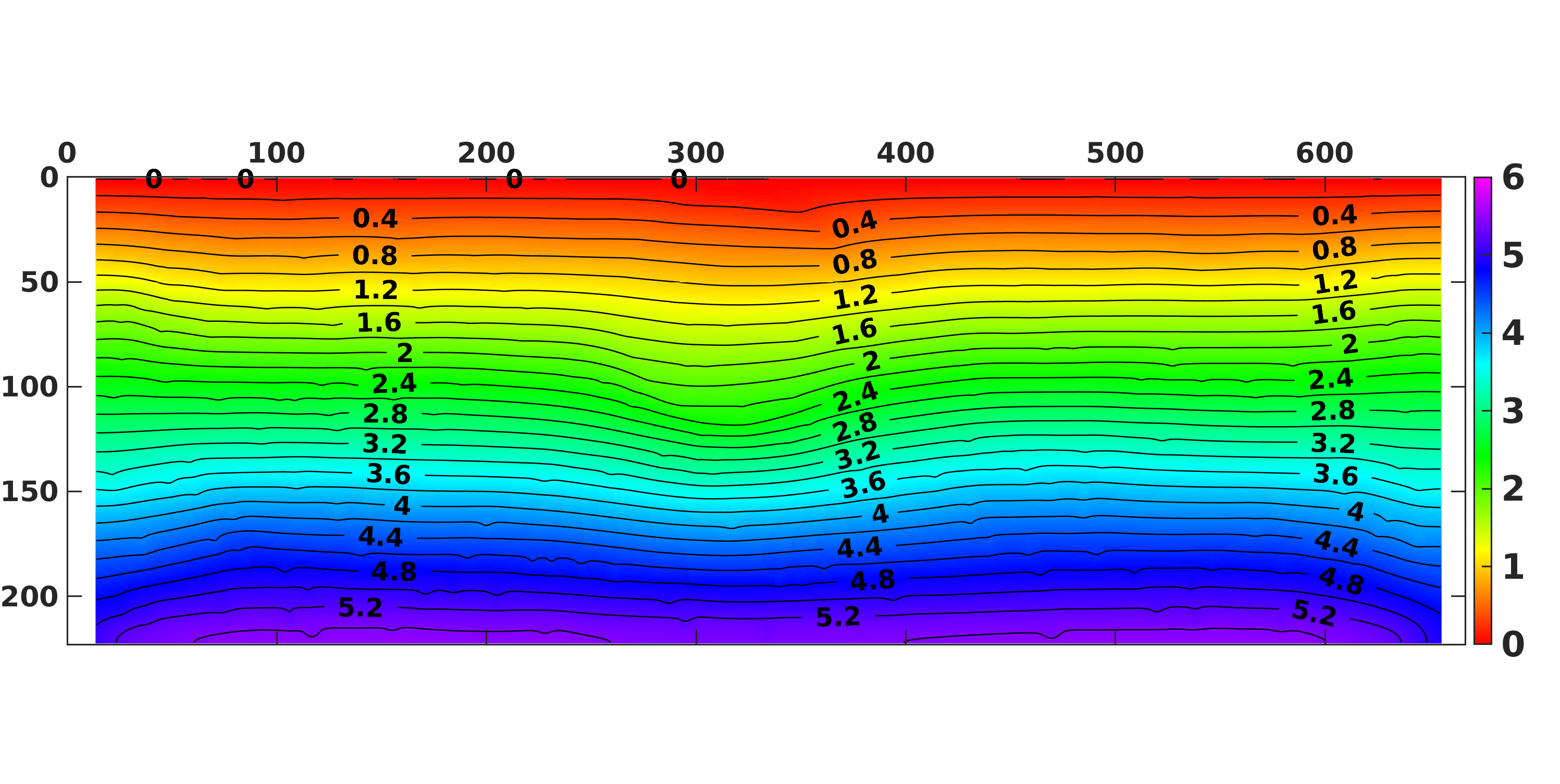}
    \includegraphics[width=0.49\textwidth, clip=true, trim={0cm 4cm 0cm 3cm}]{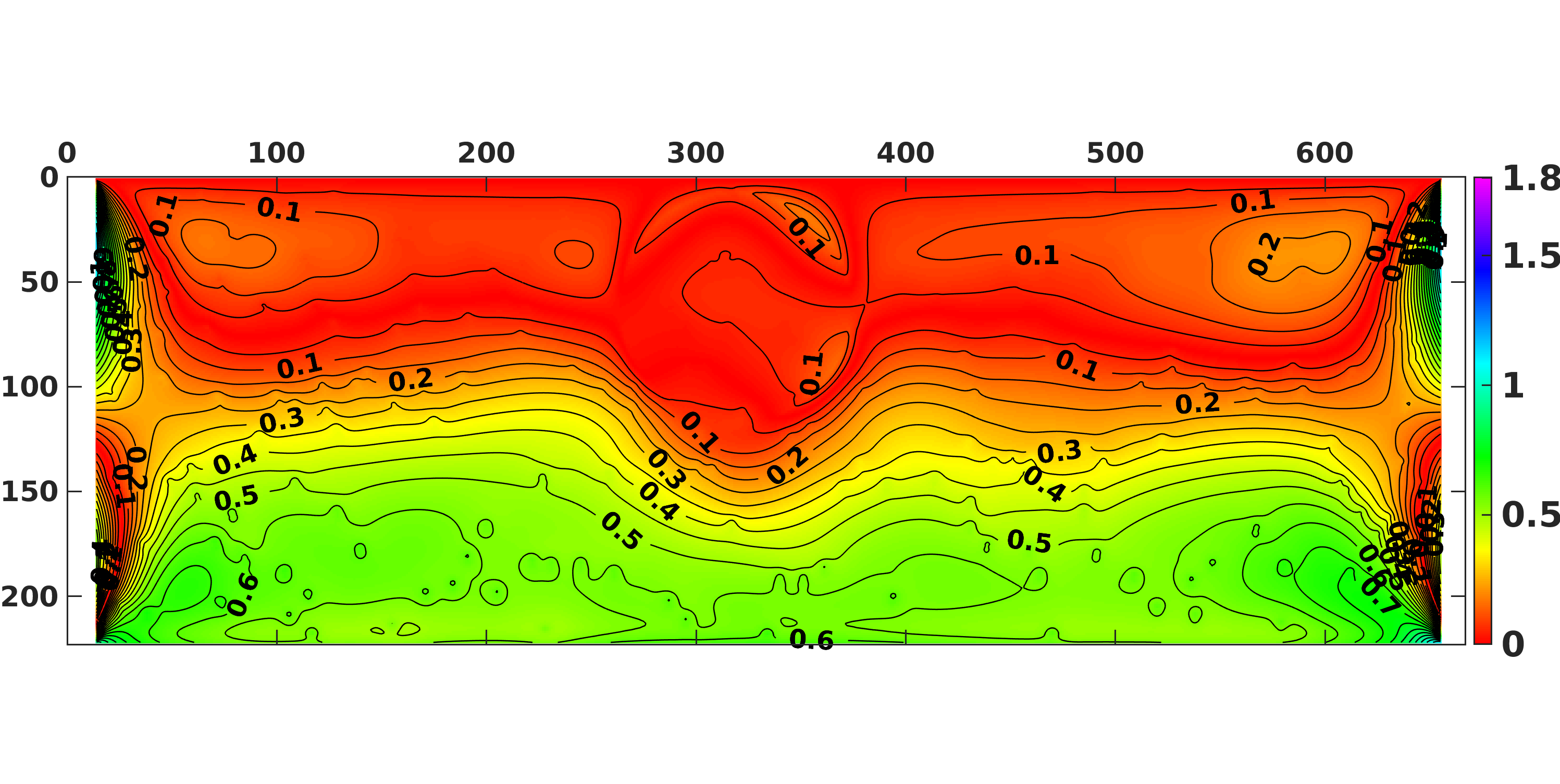}
    \caption{Test 1. Components $u_1$ and $u_2$ (top and bottom left) of estimated displacement field using \eqref{def_J_bS} with $\alpha=0.8$, $\beta=0.5$, multi-scale, and their absolute errors (right).}
    \label{fig_sim_exp_0}
\end{figure}

\begin{figure}[t!]
    \includegraphics[width=0.49\textwidth, clip=true, trim={0cm 4cm 0cm 3cm}]{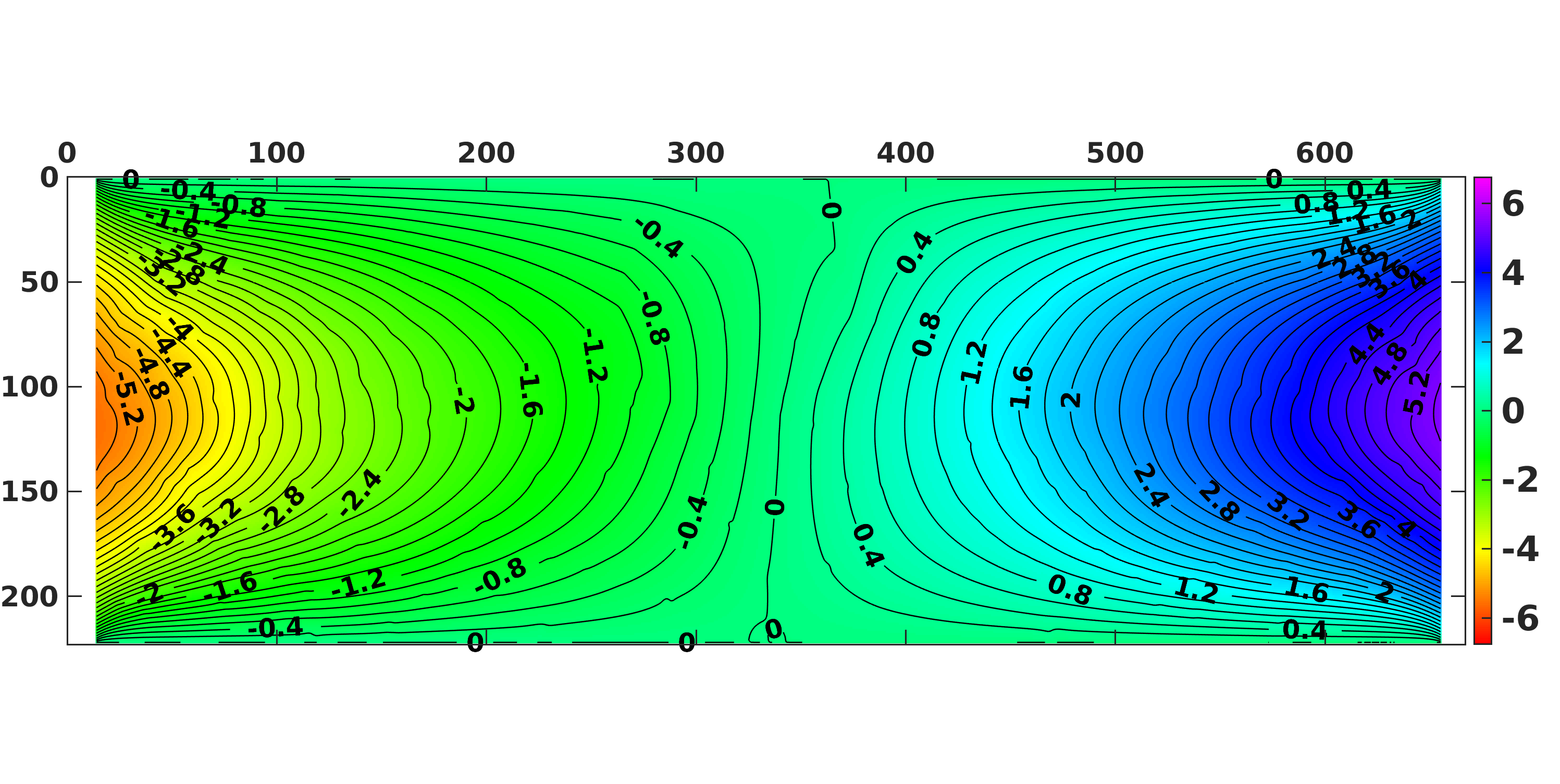}
    \includegraphics[width=0.49\textwidth, clip=true, trim={0cm 4cm 0cm 3cm}]{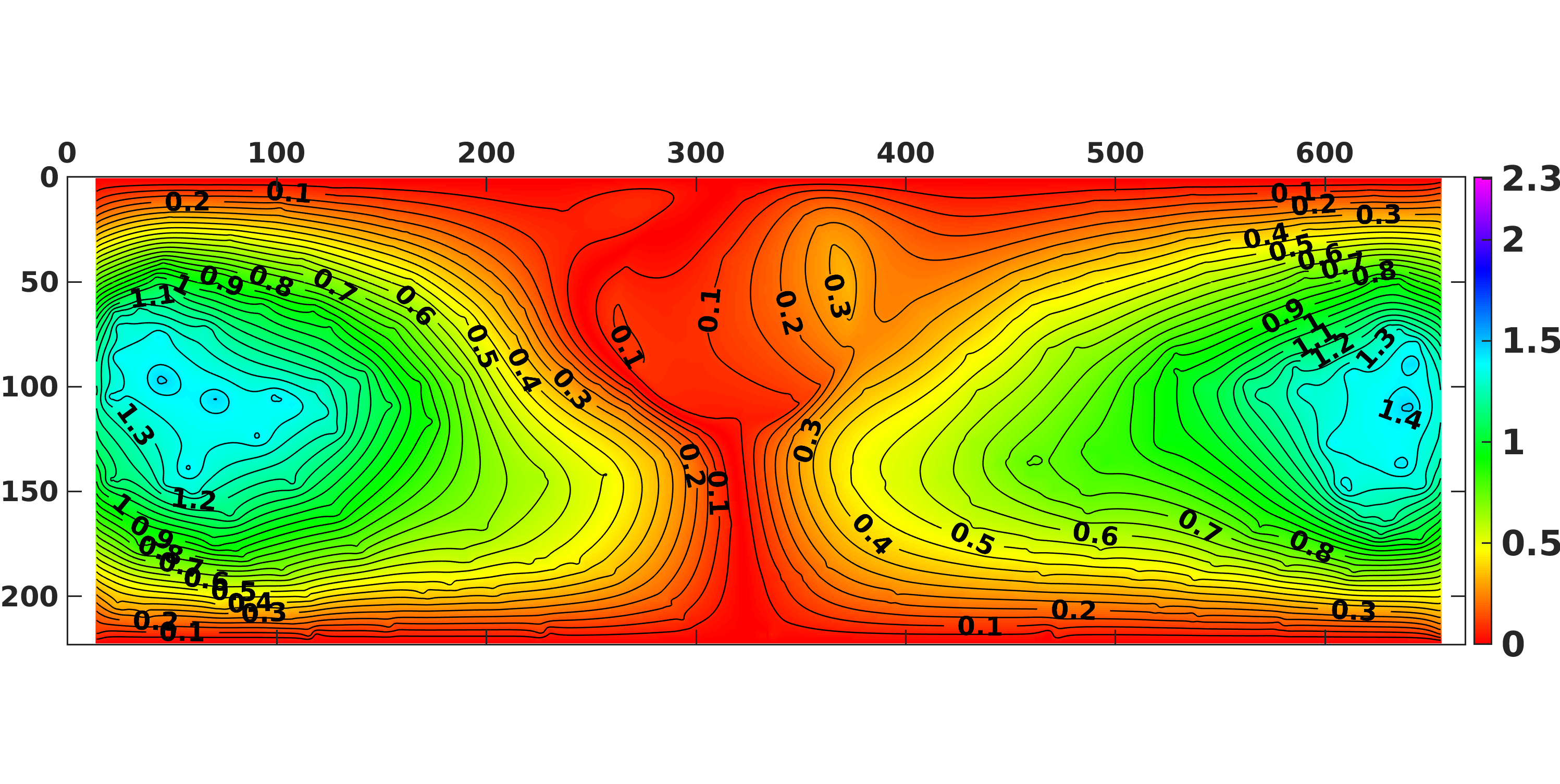}
    \\
    \includegraphics[width=0.49\textwidth, clip=true, trim={0cm 4cm 0cm 3cm}]{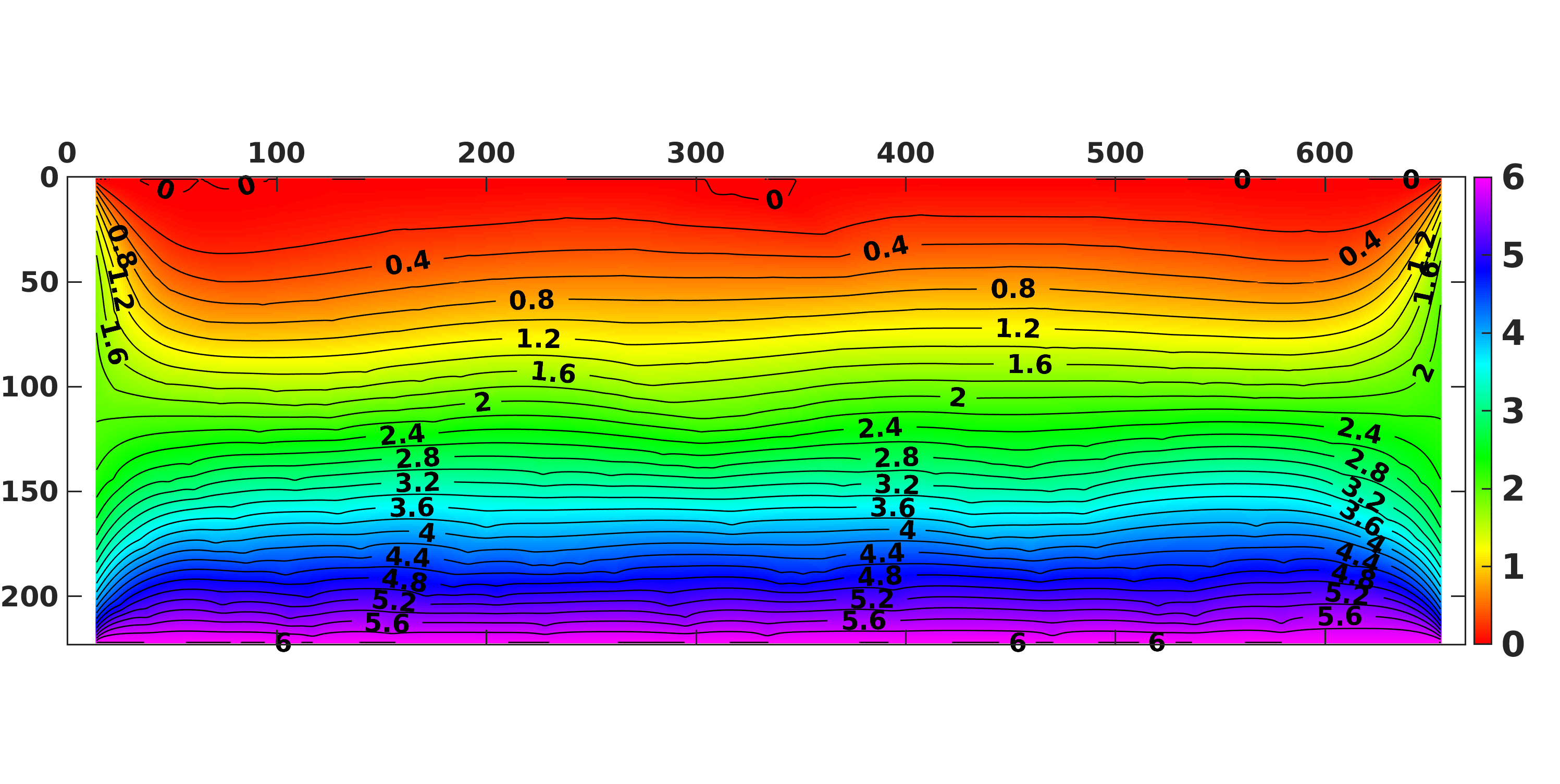}
    \includegraphics[width=0.49\textwidth, clip=true, trim={0cm 4cm 0cm 3cm}]{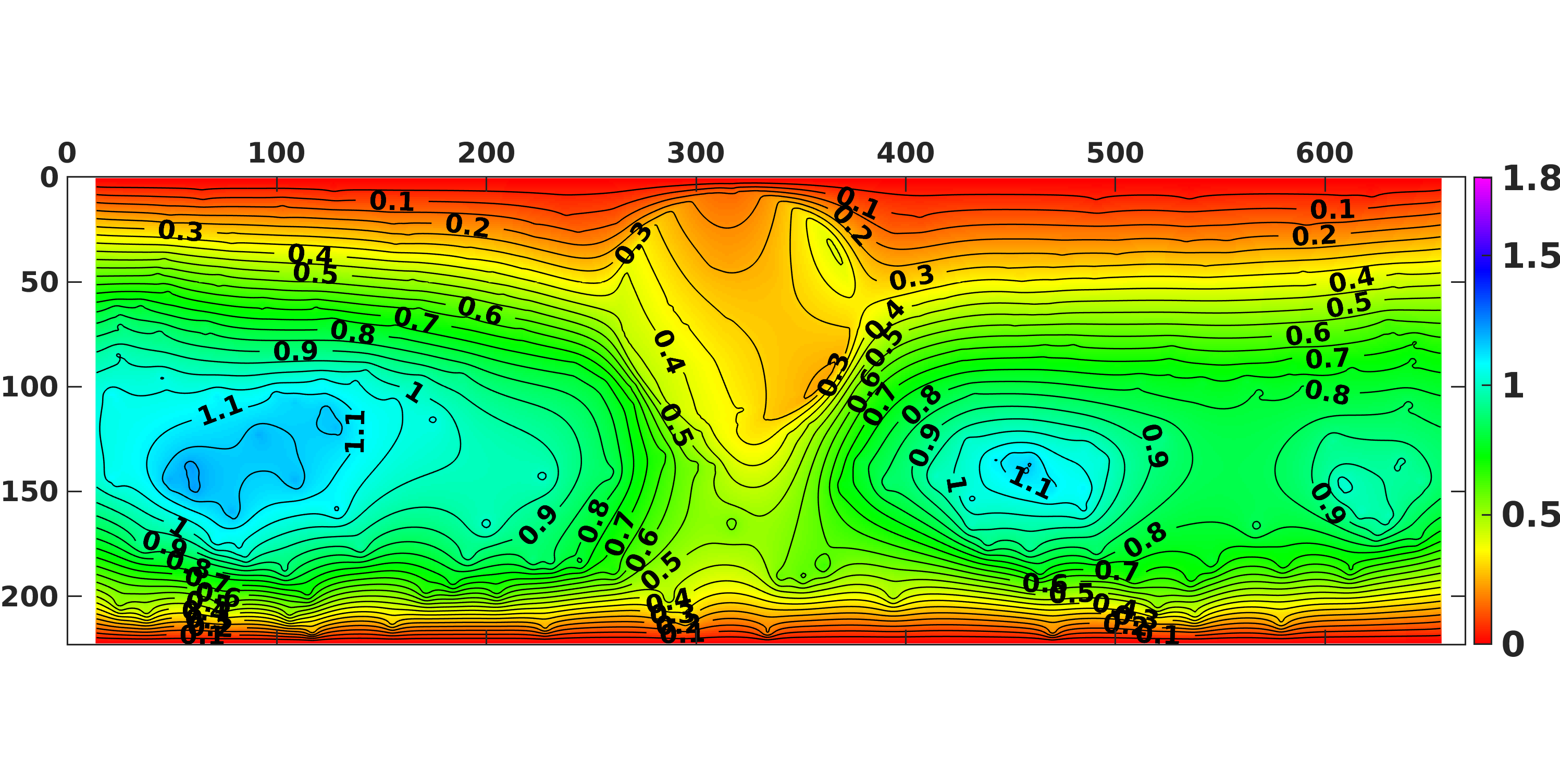}
    \caption{Test 3. Components $u_1$ and $u_2$ (top and bottom left) of estimated displacement field using \eqref{def_F} with $\alpha=0.8$, $\beta=0$, multi-scale, and their absolute errors (right).}
     \label{fig_sim_exp_2}
\end{figure}

\begin{table}[t!]
  \begin{center}
    \begin{tabular}{l|l|r|r|r|r|r|r}
    \hline
      \textbf{Test} & \textbf{Fig.} & \textbf{Parameter} & \textbf{Parameter} & \textbf{Multi-scale} & $e_{rel}(\uf)$ & $e_{rel}(u_1)$ & $e_{rel}(u_2)$\\
      No. & No. & $\alpha$ & $\beta$ & & \% & \% & \% \\
      \hline
      1 & 5 & 0.8 & 0.5 & yes & 19.22 & 28.07 & 10.10 \\
      2 &   & 0.8 & 0   & no  & 31.69 & 35.94 & 28.64 \\
      3 & 6 & 0.8 & 0   & yes & 20.92 & 23.77 & 18.87 \\
      4 &   & 0.8 & 0.5 & no  & 10.21 & 12.08 & 8.81 \\
      5 & 7 & 0.8 & 0.5 & yes & 6.48  & 7.70  & 5.56 \\
      \hline
    \end{tabular}
  \end{center}
  \caption{Relative errors of the estimated fields and their components.}
  \label{tab_rel_err}
\end{table}

\begin{figure}[t!]
    \includegraphics[width=0.49\textwidth, clip=true, trim={0cm 4cm 0cm 3cm}]{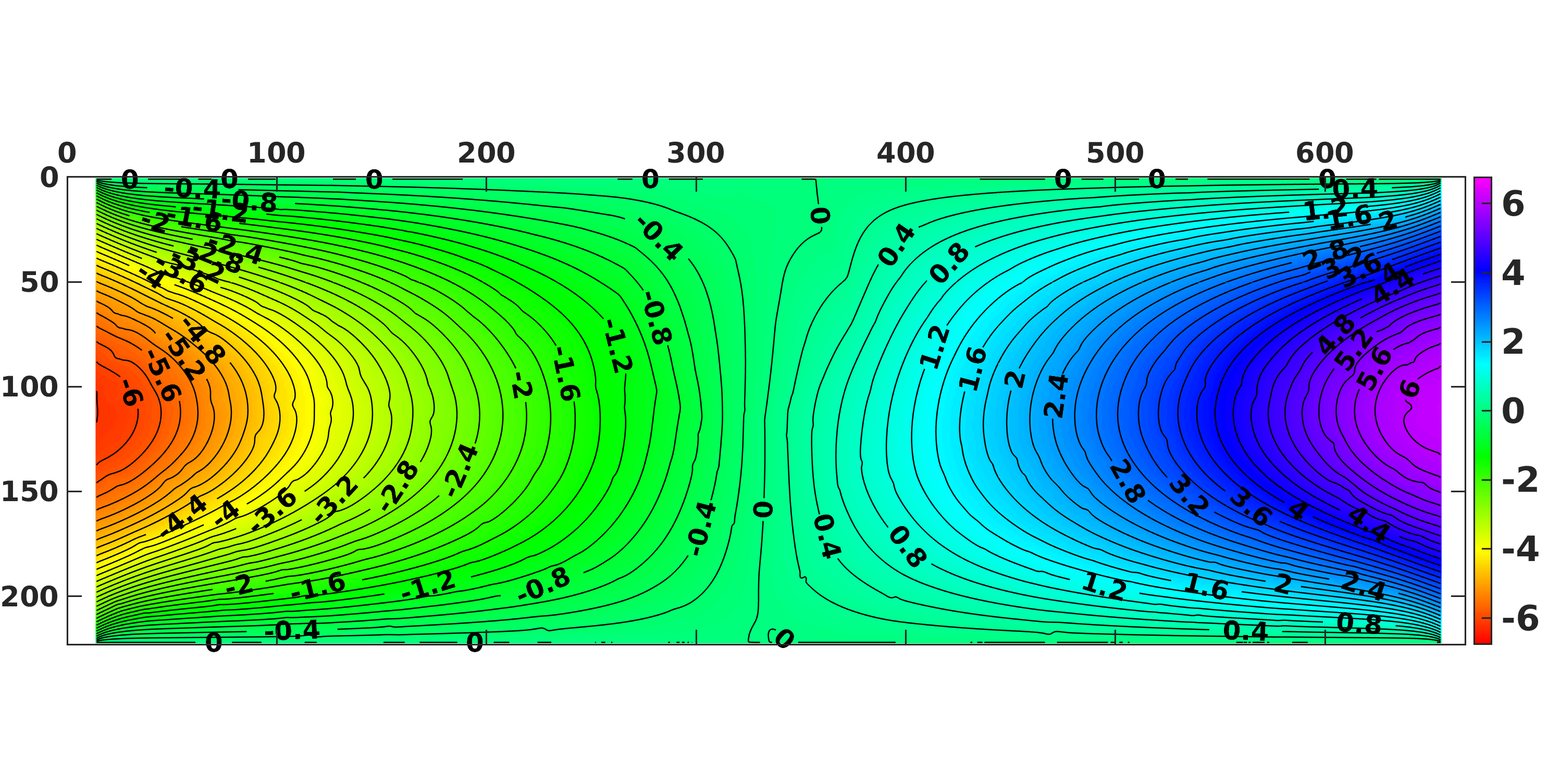}
    \includegraphics[width=0.49\textwidth, clip=true, trim={0cm 4cm 0cm 3cm}]{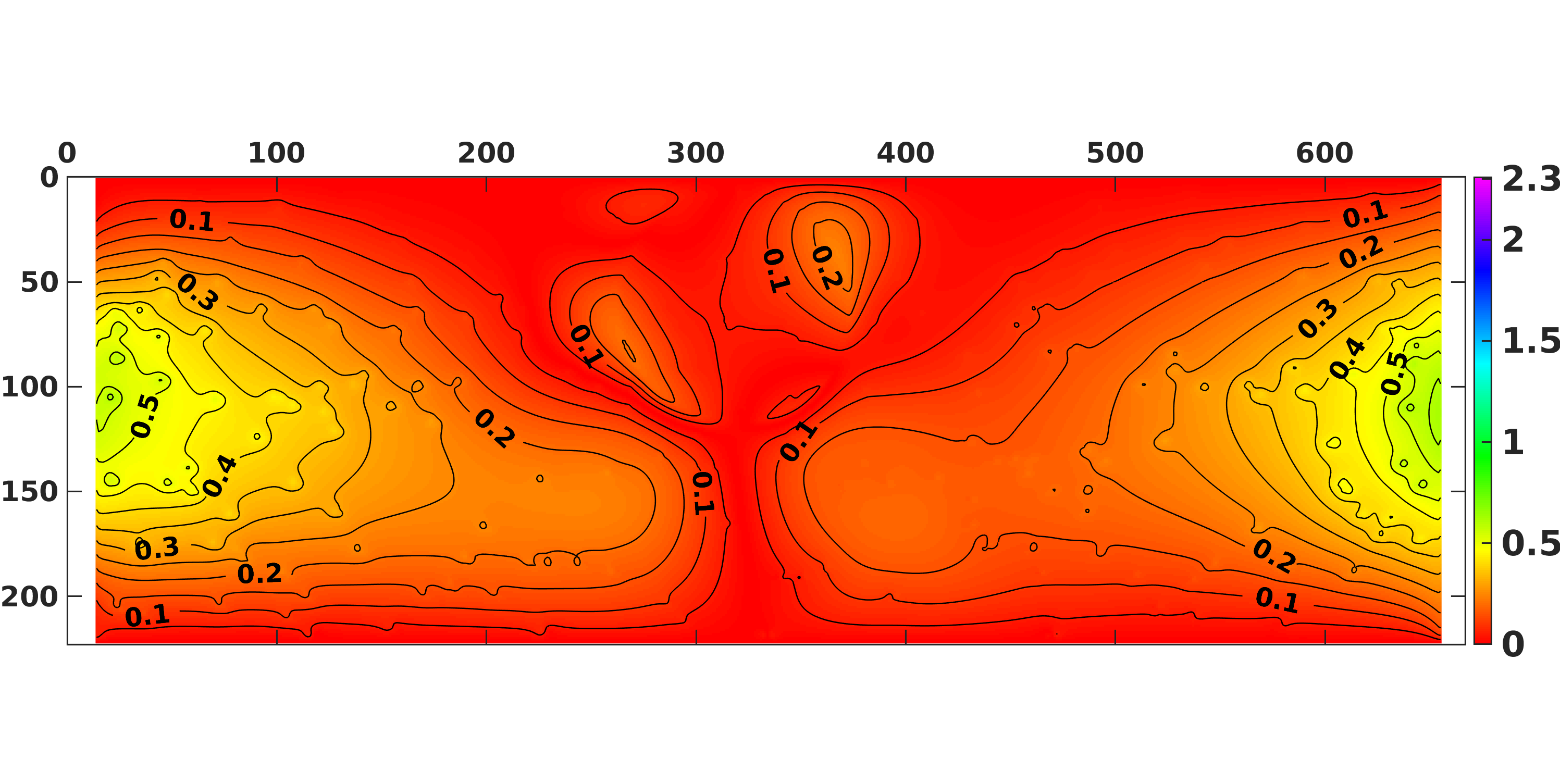}
    \\
    \includegraphics[width=0.49\textwidth, clip=true, trim={0cm 4cm 0cm 3cm}]{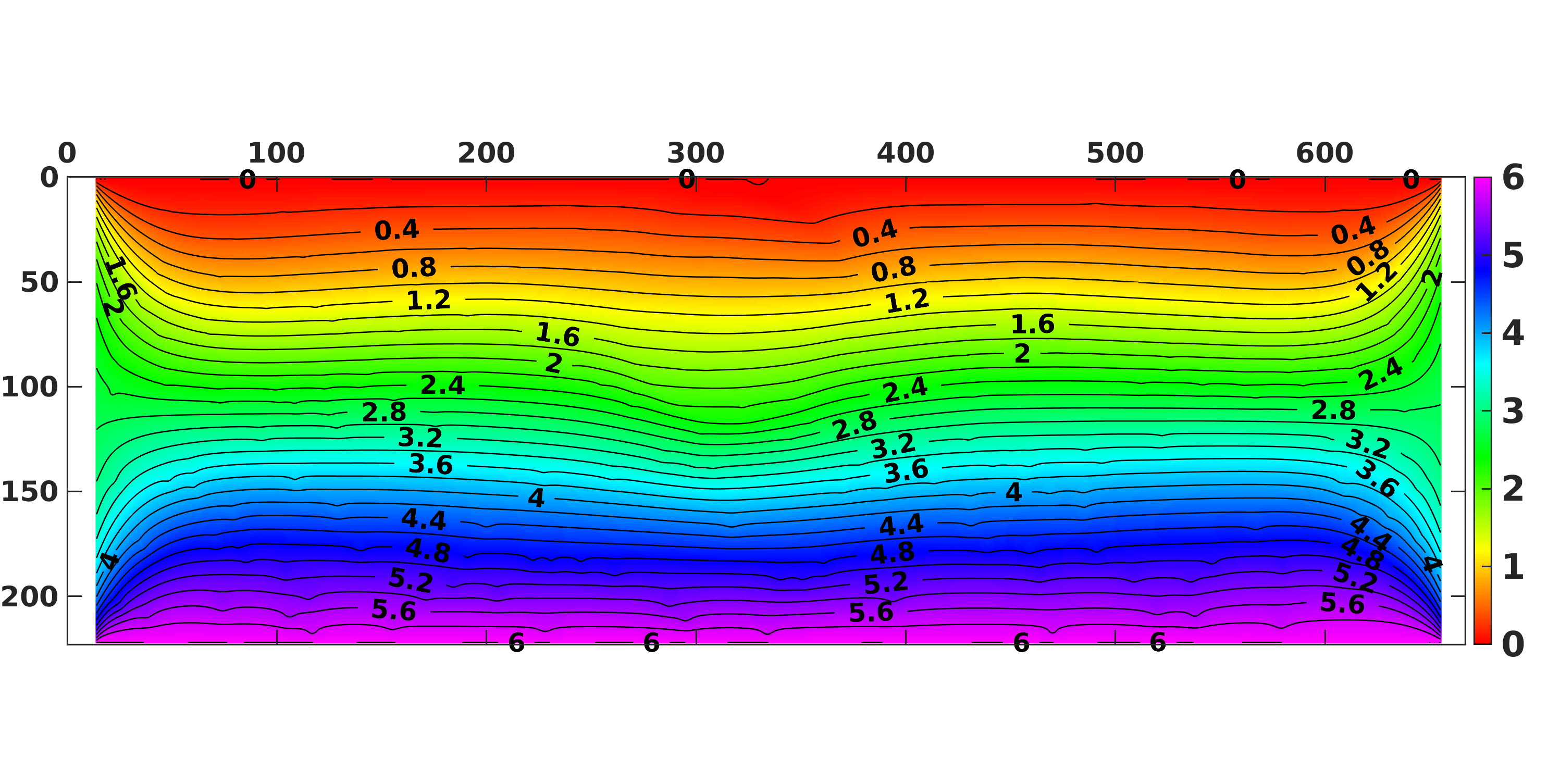}
    \includegraphics[width=0.49\textwidth, clip=true, trim={0cm 4cm 0cm 3cm}]{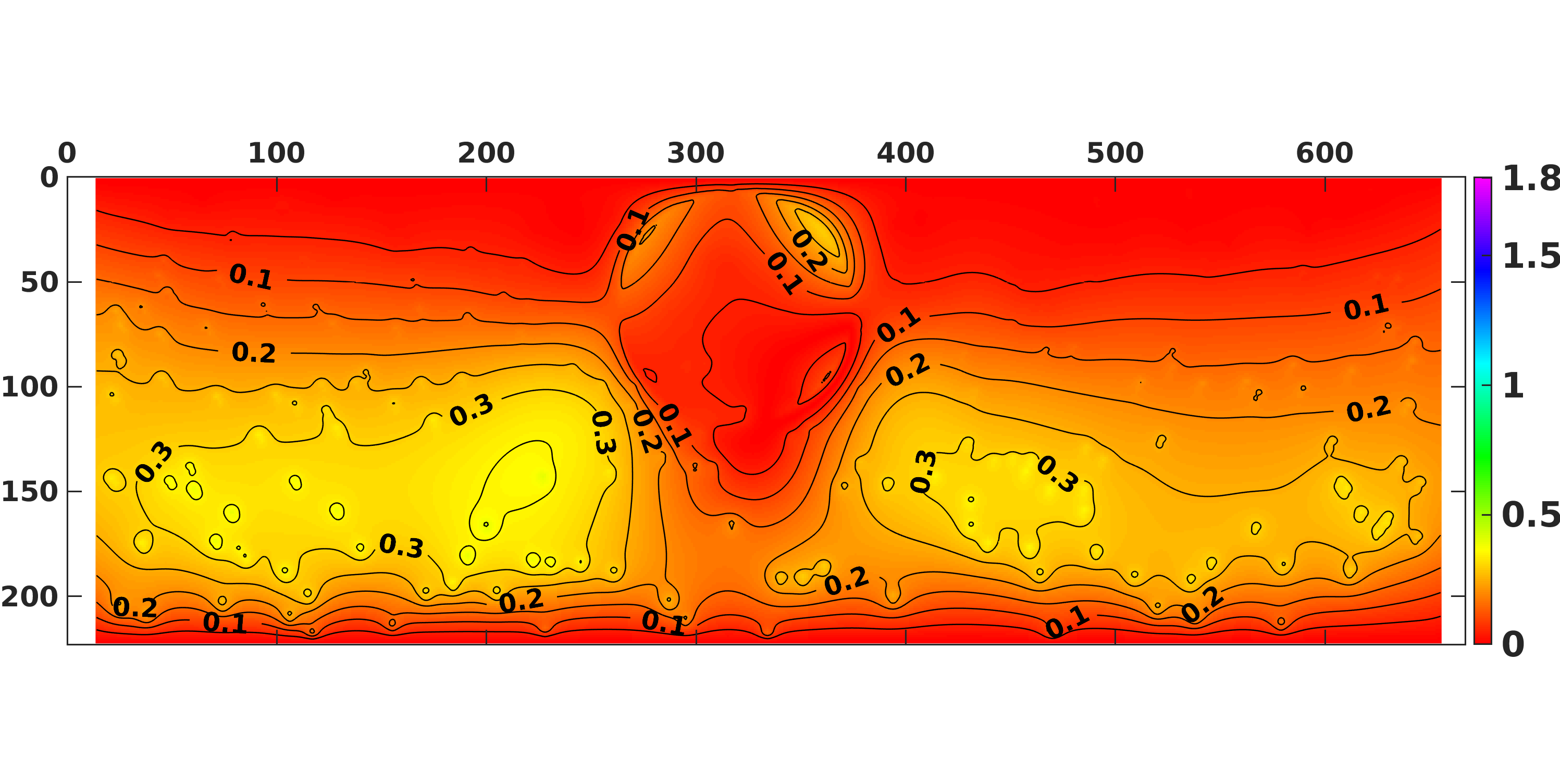}
    \caption{Test 5. Components $u_1$ and $u_2$ (top and bottom left) of estimated displacement field using \eqref{def_F} with $\alpha=0.8$, $\beta=0.5$, multi-scale, and their absolute errors (right).}
    \label{fig_sim_exp_4}
\end{figure}

\subsection{Experimental Data}

Next, we consider the data from an actual quasi-static elastography experiment. The sample has the same structure as the simulated sample, and its OCT tomograms before and after compression are depicted in Figure~\ref{fig_exp_oct}. The red arrows correspond to the vectors $\ufhi$ obtained by speckle tracking as described in \cite{Sherina_Krainz_Hubmer_Drexler_Scherzer_2020}. 
The resulting field estimate from the standard optical flow is depicted in Figure~\ref{fig_results} (left) and its approximate absolute errors in Figure~\ref{fig_exp_res} (left) in comparison to the expected field for considered sample. The error reaches $5.7$ pixels in $u_1$ and $4.7$ pixel in $u_2$. Figures~\ref{fig_results} and \ref{fig_exp_res} (right) depict the reconstructed field and its errors using the proposed elastographic optic flow method \eqref{def_F} combining the smoothness and speckle regularization terms together with the background information. For this test, we used the same parameter choice as in Test~5. The resulting displacement from the experimental data features only $0.6$ pixel misfit in $u_1$ and $0.8$ pixel in $u_2$ in certain areas in the sample.

\begin{figure}[t!]
    \includegraphics[width=0.49\textwidth, clip=true, trim={0cm 4cm 0cm 3cm}]{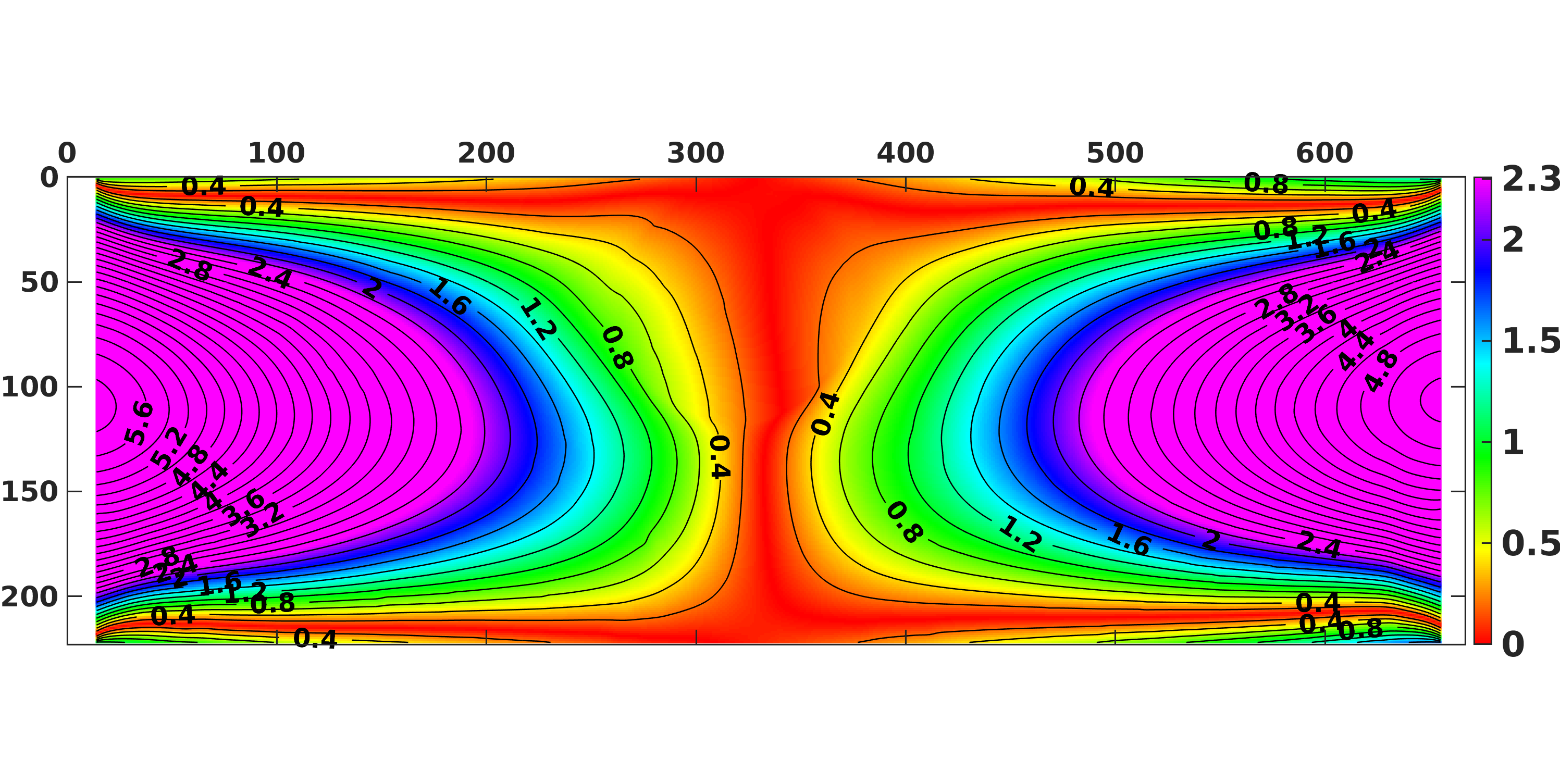}
    \includegraphics[width=0.49\textwidth, clip=true, trim={0cm 4cm 0cm 3cm}]{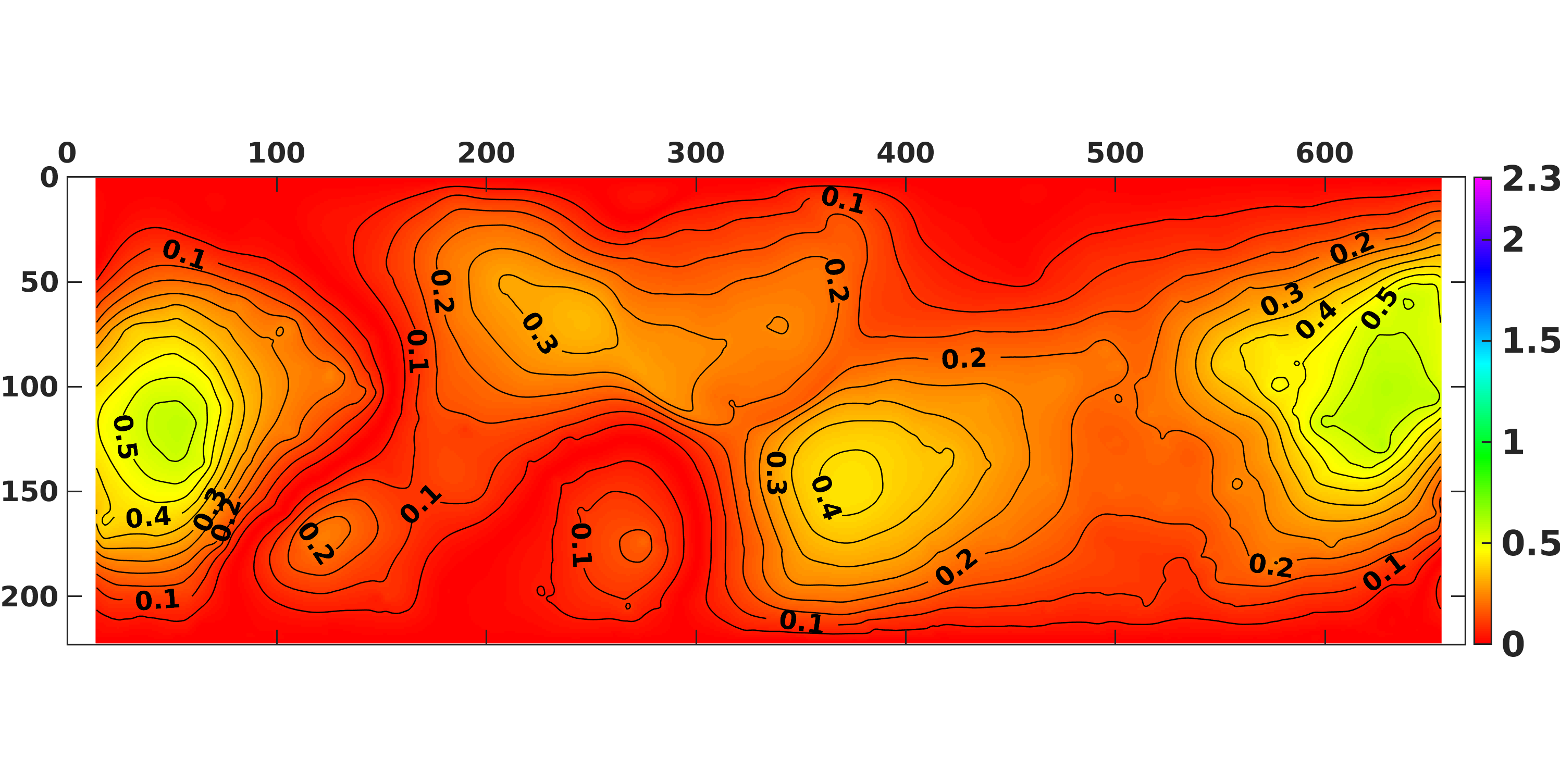}
    \\
    \includegraphics[width=0.49\textwidth, clip=true, trim={0cm 4cm 0cm 3cm}]{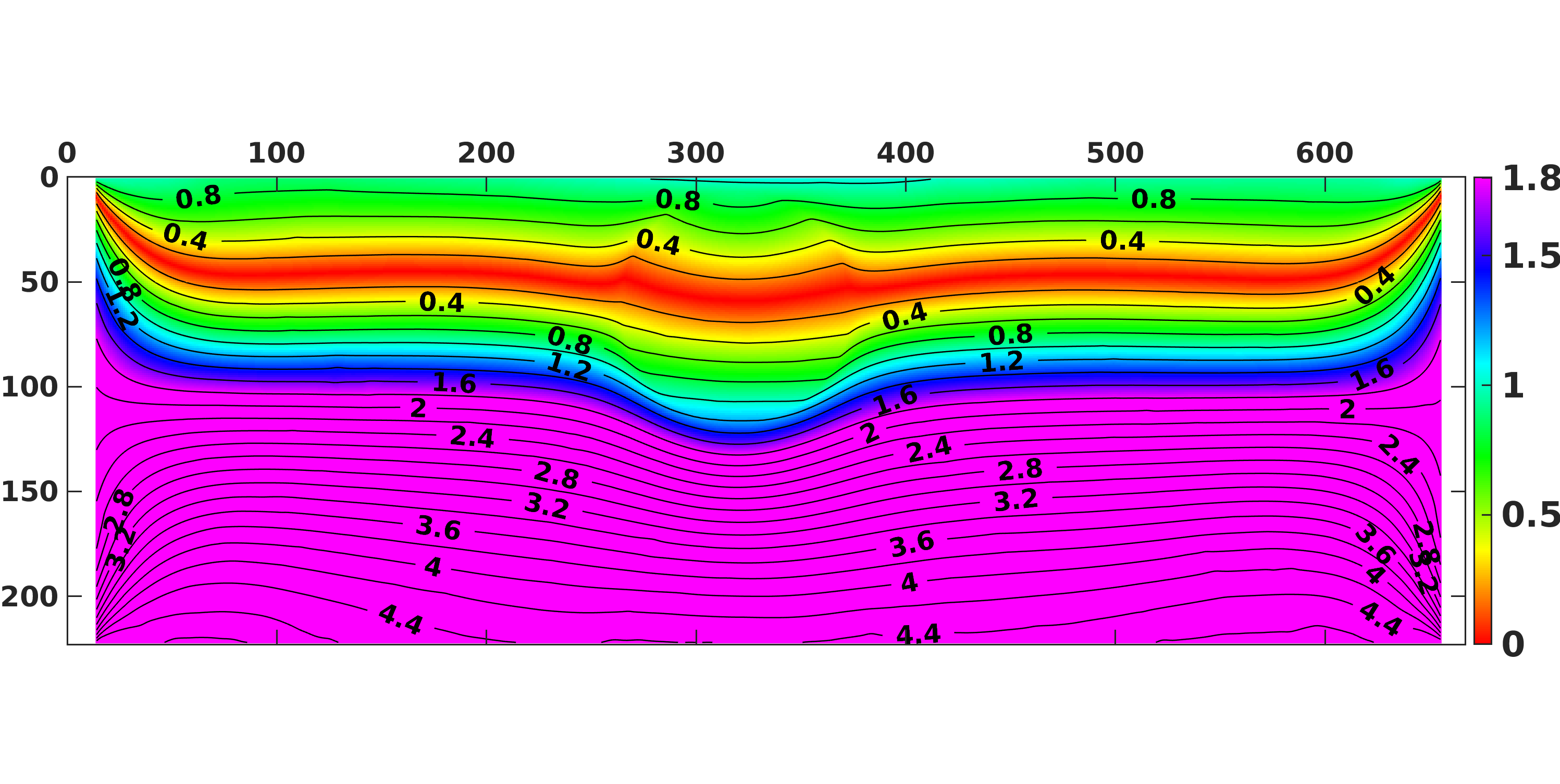}
    \includegraphics[width=0.49\textwidth, clip=true, trim={0cm 4cm 0cm 3cm}]{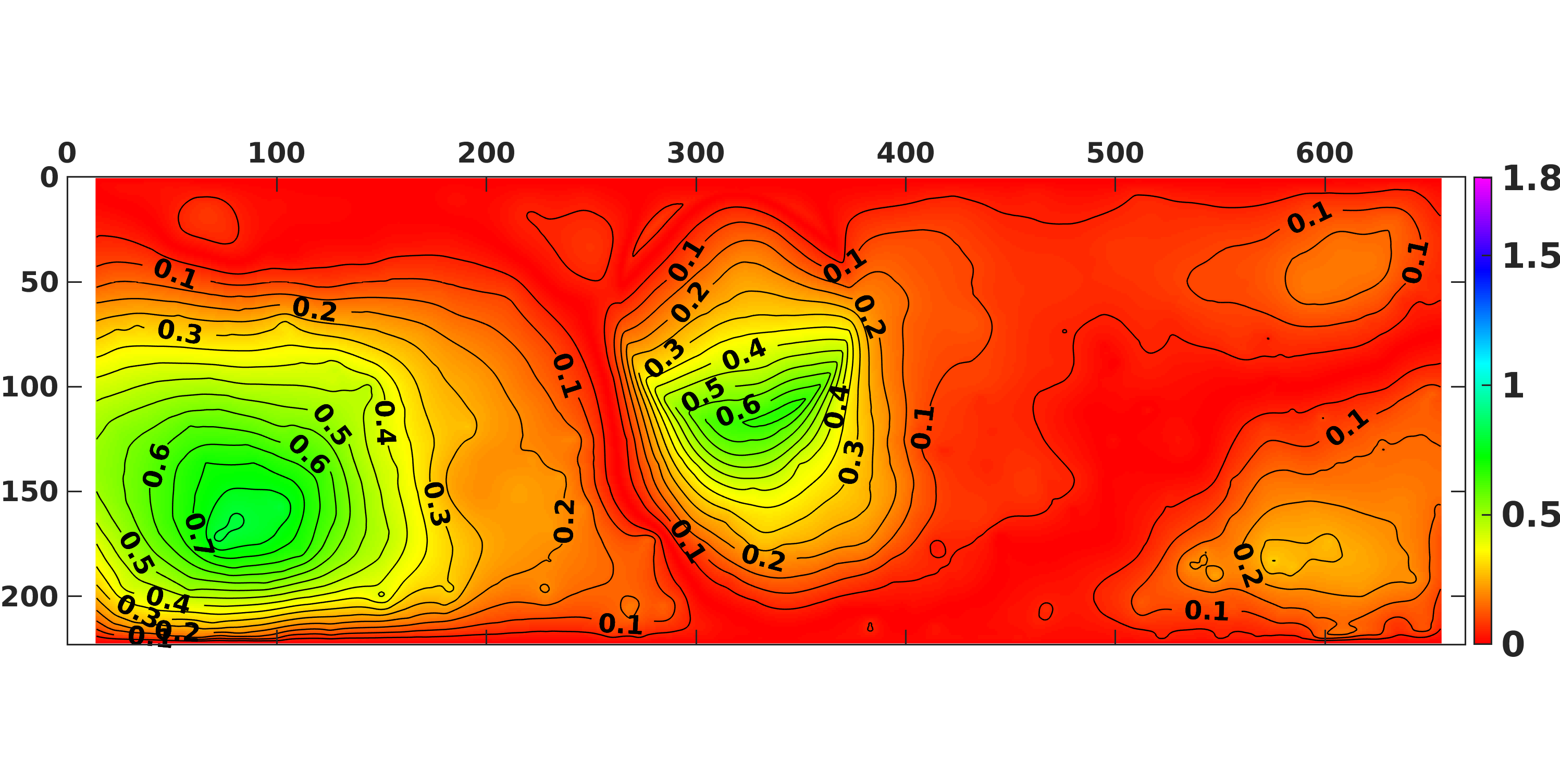}
    \caption{Approximate absolute errors in $u_1$ (top) and $u_2$ (bottom) of estimated fields with the standard optical flow method (left) and EOFM (right).}
    \label{fig_exp_res}
\end{figure}

\section{Summary}

In this paper we introduced an elastographic optical flow method, which allows efficient reconstruction of the displacement field in elastography with scattering imaging experiments. We demonstrate that an efficient algorithm has to take into account information on the physical experiment, such as appropriate boundary conditions, the background medium, and speckle information. The numerical experiments with real data and synthetic data drastically show the necessity of this experimental information.



\begin{thebibliography}{8}


\bibitem{Aubert_Kornprobst_2006}
Aubert G., Kornprobst P.: Mathematical problems in image processing. 2nd edition. Springer, New York (2006)


\bibitem{Baker_Scharstein_Lewis_Roth_Black_Szeliski_2011}
Baker S., Scharstein D., Lewis J. P., Roth S., Black M. J., Szeliski R.: A Database and Evaluation Methodology for Optical Flow. Int. J. Comput. Vis. \textbf{92}(1), 1--31 (2011)


\bibitem{Black_Anandan_1996}
Black M. J., Anandan P.: The robust estimation of multiple motions: Parametric and piecewise-smooth flow fields. Comp. Vis. Image Und. \textbf{63}, 75--104 (1996)


\bibitem{Brox_Malik_2011}
Brox T., Malik J.: Large Displacement Optical Flow: Descriptor Matching in Variational Motion Estimation. IEEE Trans. Pattern Anal. Mach. Intell. \textbf{33}(3), 500--513 (2011)

\bibitem{Chen_Jin_Lin_Cohen_Wu_2013}
Chen Z., Jin H., Lin Z., Cohen S., Wu Y.: Large Displacement Optical Flow from Nearest Neighbor Fields. In: 2013 IEEE Conference on Computer Vision and Pattern Recognition, pp. 2443--2450. Portland, OR (2013).


\bibitem{Duncan_Kirkpatrick_2001}
Duncan D. D. and Kirkpatrick S. J.: Processing algorithms for tracking speckle shifts in optical elastography of biological tissues. J. Biomed. Opt. \textbf{6}(4), 418 (2001)


\bibitem{Doyley_2012}
Doyley M. M.: Model-based elastography: a survey of approaches to the inverse elasticity problem. Physics in Medicine and Biology. \textbf{57}(3), R35--R73 (2012)


\bibitem{Glaatz_Scherzer_Widlak_2015}
Glatz T., Scherzer O., Widlak T.: Texture Generation for Photoacoustic Elastography. J. Math. Imaging Vision \textbf{52}(3), 369--384 (2015)


\bibitem{Hubmer_Sherina_Neubauer_Scherzer_2018}
Hubmer, S., Sherina, E., Neubauer, A., Scherzer, O.: Lam\'e Parameter Estimation from Static Displacement Field Measurements in the Framework of Nonlinear Inverse Problems. SIAM Journal on Imaging Sciences \textbf{11}(2), 1268--1293 (2018)


\bibitem{Lauze_Kornprobst_Memin_2004}
Lauze F., Kornprobst P., Memin E.: A Coarse to Fine Multiscale Approach for Linear Least Squares Optical Flow Estimation. In: British Machine Vision Conference, pp. 767--776 (2010)


\bibitem{Manduca_Oliphant_Dresner_Mahowald_Kruse_Amromin_Felmlee_Greenleaf_Ehman_2001}
Manduca A., Oliphant T. E., Dresner M. A., Mahowald J. L., Kruse S. A., Amromin E., Felmlee J. P., Greenleaf J. F., Ehman R. L.: Magnetic resonance elastography: Non-invasive mapping of tissue elasticity \textbf{5}, 237--354 (2001)


\bibitem{MeinhardtLlopis_SanchezPerez_Kondermann_2013}
Meinhardt-Llopis E., S{\'a}nchez P{\'e}rez J., Kondermann D.: Horn-Schunck Optical Flow with a Multi-Scale Strategy. Image Proc. On Line \textbf{3}, 151--172 (2013)
	

\bibitem{Modersitzki_2009}
Modersitzki J.: FAIR: flexible algorithms for image registration. Society for Industrial and Applied Mathematics (SIAM), Philadelphia, PA (2009)


\bibitem{Schmid_Zabihian_Widlak_Glatz_Liu_Drexler_Scherzer_2015}
Schmid J., Zabihian B., Widlak T., Glatz T., Liu M., Drexler W., Scherzer O.: Texture generation in compressional photoacoustic elastography. In: Photons Plus Ultrasound: Imaging and Sensing 2015, Proceedings of SPIE, pp. 93232S (2015) 


\bibitem{Schmitt_1998}
Schmitt J. M.: OCT elastography: imaging microscopic deformation and strain of tissue. Opt. Express \textbf{3}(6) 199--211 (1998)


\bibitem{Schmitt_Xiang_Yung_1998}
Schmitt J. M., Xiang S. H., Yung K. M.: Differential absorption imaging with optical coherence tomography. J. Opt. Soc. Amer. A \textbf{15}, 2288--2296 (1998)


\bibitem{Schnoerr_1991}
Schn\"orr C.: Determining optical flow for irregular domains by minimizing quadratic functionals of a certain class. Int. J. Comput. Vision \textbf{6}, 25--38 (1991)


\bibitem{Sherina_Krainz_Hubmer_Drexler_Scherzer_2020}
Sherina E., Krainz L., Hubmer S., Drexler W., Scherzer O.: Displacement field estimation from OCT images utilizing speckle information with applications in quantitative elastography. Inverse Problems \textbf{36}(12), 124003 (2020)


\bibitem{Sun_Roth_Black_2013}
Sun D., Roth S., Black M. J.: A Quantitative Analysis of Current Practices in Optical Flow Estimation and the Principles Behind Them. Int. J. Comput. Vision \textbf{106}(2), 115--137 (2013)


\bibitem{Wang_Larin_2015}
Wang S., Larin K. V.: Optical coherence elastography for tissue characterization: a review. J. Biophotonics \textbf{8}(4), 279--302 (2015)


\bibitem{Weickert_Bruhn_Brox_Papenberg_2006}
Weickert J., Bruhn A., Brox T., Papenberg N.: A survey on variational optic flow methods for small displacements. In: Scherzer O. (ed.) Mathematical Models for Registration and Applications to Medical Imaging, vol. 10, pp. 103--136. Springer, Berlin, Heidelberg (2006)


\bibitem{Wijesinghe_Kennedy_Sampson_2020}
Wijesinghe P., Kennedy B. F., Sampson D. D.: Chapter 9 - Optical elastography on the microscale. In: Alam, S. K., Garra, B. S. (eds.) Tissue Elasticity Imaging, pp. 185--229. Elsevier, Amsterdam (2020)
	

\end{thebibliography}
\end{document}